\documentclass[a4paper,11pt]{article}
\newcommand{\Title}{Coupling of Brownian motions in Banach spaces}

\synctex=1

\usepackage{amsmath,amsthm}
\usepackage{graphicx}
\usepackage{verbatim}
\usepackage[margin=2cm]{geometry}
\usepackage[normalem]{ulem}
\usepackage{amsfonts,amssymb,amsopn,amscd}

\usepackage{url}
\usepackage[british]{babel}
\usepackage{natbib}

\usepackage[colorlinks]{hyperref}
\usepackage[figure,table]{hypcap} 
\hypersetup{
   pdfauthor={Elisabetta Candellero and Wilfrid S. Kendall},
   pdftitle={\Title},
   pdfsubject={Coupling on Banach spaces},
   bookmarksnumbered,
   pdfstartview={FitH},
   citecolor={blue},
   linkcolor={red},
   urlcolor={black},
   pdfpagemode={UseOutlines}
}

\usepackage{WSKmaths}
\graphicspath{{image/}}

\newtheorem*{thm*}{\rm\textbf{Theorem}}

\newcommand{\N}{\mathbb{N}}
\newcommand{\A}{\mathbf{A}} 
\newcommand{\B}{\mathbf{B}} 

\newcommand{\cB}{\widetilde\B} 
\newcommand{\BAN}{\mathcal{W}}
\newcommand{\BANd}[2]{\left\langle#1,#2\right\rangle_{\BAN^*;\BAN}}
\newcommand{\BANn}[1]{\left\|#1\right\|_\BAN}
\newcommand{\BANnt}[1]{\|#1\|_\BAN} 
\newcommand{\CM}{{\mathcal{H}_\mu}}
\newcommand{\CMn}[1]{\left\|#1\right\|_\CM}
\newcommand{\CMnt}[1]{\|#1\|_\CM} 
\newcommand{\CMd}[2]{\left\langle#1,#2\right\rangle_{\CM}}
\newcommand{\COV}{\operatorname{Cov}}

\newcommand{\vertiii}[1]{{\left\vert\kern-0.25ex\left\vert\kern-0.25ex\left\vert #1 
    \right\vert\kern-0.25ex\right\vert\kern-0.25ex\right\vert}}

\numberwithin{equation}{section}

\begin{document}

\title{\Title}

\author{Elisabetta Candellero\footnote{Dept. Statistics, University of Warwick, Coventry CV4 7AL, UK; \url{E.Candellero@warwick.ac.uk}.} 
\and 
Wilfrid S.\ Kendall\footnote{Dept. Statistics, University of Warwick, Coventry CV4 7AL, UK; \url{W.S.Kendall@warwick.ac.uk}.}%
{~}\footnote{WSK supported by EPSRC Research Grant EP/K013939.
This is a theoretical research paper and, as such, no new data were created during this study.
}}

\maketitle

\begin{abstract}
Consider a separable Banach space \(\BAN\) supporting a non-trivial Gaussian measure \(\mu\).
The following is an immediate consequence of the theory of Gaussian measure on Banach spaces:
there exist (almost surely) successful couplings of
two \(\BAN\)-valued Brownian motions \(\B\) and \(\cB\) begun at starting points \(\B(0)\) and \(\cB(0)\)
if and only if 
the difference \(\B(0)-\cB(0)\) of
their initial 
positions
belongs to the Cameron-Martin space \(\CM\) of \(\BAN\) corresponding to
\(\mu\).
For more general starting points,
can there be
a ``coupling at time \(\infty\)'',
such that almost surely \(\BANnt{\B(t)-\cB(t)} \to 0\) as \(t\to\infty\)?
Such couplings exist
if there exists a Schauder basis of \(\BAN\) which is also a \(\CM\)-orthonormal basis of \(\CM\).
We
propose
(and discuss some partial answers to) 
the question,
to what extent can one
express the probabilistic Banach space property 
``Brownian coupling at time \(\infty\) is always possible'' 
purely in terms of Banach space geometry?
\end{abstract}

\noindent
\textbf{AMS 2010 Mathematics subject classification:}
60J65; 60H99; 28C20

\noindent
\textbf{Keywords and phrases:}
\textsc{
Banach space;
Brownian motion; 
Cameron-Martin space;
Coupling;
Coupling at time \(\infty\);
Gaussian measure;
Hilbert spaces;
Markushevich basis (M-basis);
Reflection coupling;
Schauder basis.
}


\section{Introduction}
When can there be an (almost surely)
\emph{successful coupling} of two Brownian motions \(\B\) and \(\cB\) defined on a separable Banach space \(\BAN\)?
(When can \(\B\) and \(\cB\) be made to coincide at and after some random time \(\tau<\infty\)?)
Is a weaker kind of success more widely available?
The purpose of this paper is to explore this weaker kind of success, and to raise an interesting open question.

Naturally the answer to the first question depends on the initial 
displacement of \(\B\) 
relative 
to \(\cB\).
Expressed more precisely, 
given a \(\BAN\)-valued Brownian motion \(\B=\{\B(t):t\geq 0\}\) started at \(0\), 
for which \(x\in \BAN\) is it 
possible to construct a second Brownian motion 
\(\cB=\{\cB(t):t\geq 0\}\) starting at \(x\)
and such that \(\B(\tau+\cdot)=\cB(\tau+\cdot)\) after some random time \(\tau\)?
(The general case
\(\cB(0)-\B(0)=x\) 
follows by translation invariance.)

We establish criteria for 
relative displacements \(x\) 
permitting
almost surely {successful (classical) coupling},
also almost sure \emph{coupling at time \(\infty\)} 
(almost surely \(\BANnt{\cB(t)-\B(t)}\to0\) as \(t\to\infty\) when \(\BANnt{\cdot}\) is the norm of \(\BAN\): 
see Section \ref{sec:coupling_schauder}).
In both cases the coupling can be chosen to be an \emph{immersion} or \emph{Markovian coupling}:
martingales in the natural filtration of \(\B\), respectively the natural filtration of \(\cB\),
remain martingales in the joint natural filtration of \(\B\) and \(\cB\) \citep{Kendall-2013a}.

The question of successful classical coupling 
is
resolved
by recalling the notion of 
the \emph{Cameron-Martin} space of a Gaussian measure \(\mu\) on \(\BAN\) \citep{CameronMartin-1944}.
Given a Gaussian measure \(\mu\), there is a 
standard construction of a Hilbert space \(\CM\) densely embedded in \(\BAN\),
such that translations of \(\mu\) by elements of \(\CM\) are 
exactly those that induce translated measures which are absolutely continuous with respect to \(\mu\).
This
theory, together with the well-known Aldous inequality \citep[Lemma 3.6]{Aldous-1983},
immediately 
yields
the following.
\begin{thm*}[See Theorem \ref{thm:xinH} below]\emph{
Let \(\BAN\) be a separable Banach space with norm \(\BANnt{\cdot}\).
Consider a \(\BAN\)-valued Brownian motion \(\B\) started at \(0\) and an element \(x\in \BAN\). 
Another Brownian motion \(\cB\) can be constructed to start at \(x\) and almost surely meet \(\B\) within finite time
if and only if the relative initial displacement \(x\) lies in \(\CM\), 
for \(\mu=\Law{\B(1)}\).
In that case the ``fastest possible'' coupling time is realized as the hitting time \(\tau\) 
of \(\CMd{x}{\B(\tau)}\) on \(\tfrac12\CMnt{x}^2\).
}\end{thm*}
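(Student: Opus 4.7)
The plan is to prove the two directions of the biconditional separately. The \emph{only if} direction will combine Cameron--Martin theory with Aldous' inequality; the \emph{if} direction demands an explicit reflection coupling driven by projecting \(\B\) onto the direction picked out by \(x\).

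For the \emph{only if} direction, fix \(T>0\). The law \(\Law{\B(T)}\) is a centred Gaussian measure on \(\BAN\) whose Cameron--Martin space coincides with \(\CM\) as a set (only the norm is scaled by \(\sqrt{T}\)). By the Cameron--Martin theorem, if \(x\notin\CM\) then \(\Law{\B(T)}\) and \(\Law{\cB(T)}\) (the same measure translated by \(x\)) are mutually singular, so their total-variation distance equals \(1\). Aldous' inequality then forces \(\Pr[\tau>T]=1\) for every \(T>0\), ruling out any successful coupling.

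For the \emph{if} direction, suppose \(x\in\CM\) and extend \(\CMd{x}{\cdot}\) to a measurable linear functional on \(\BAN\) via the Paley--Wiener integral. The scalar process \(W(t):=\CMd{x}{\B(t)}\) is a one-dimensional Brownian motion with instantaneous variance \(\CMnt{x}^2\), and one writes \(\B(t) = (W(t)/\CMnt{x}^2)\,x + \B^\perp(t)\) with \(\B^\perp(t):=\B(t)-(W(t)/\CMnt{x}^2)\,x\). The identity \(\COV(\ell(\B(t)),W(s)) = \min(t,s)\,\ell(x)\) for \(\ell\in\BAN^*\) (immediate from the Cameron--Martin characterisation of Gaussian covariance) shows that \(\ell(\B^\perp(\cdot))\) is uncorrelated with \(W\) for every \(\ell\); jointly Gaussian \(\B^\perp\) and \(W\) are therefore independent. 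I would then perform the classical scalar reflection: set \(\widetilde W(t):=\CMnt{x}^2-W(t)\) for \(t<\tau:=\inf\{s:W(s)=\tfrac12\CMnt{x}^2\}\) and \(\widetilde W(t):=W(t)\) for \(t\geq\tau\). Defining \(\cB(t):=(\widetilde W(t)/\CMnt{x}^2)\,x + \B^\perp(t)\) then yields a \(\BAN\)-valued process starting at \(x\) that agrees with \(\B\) from \(\tau\) onward. Optimality of \(\tau\) follows by projecting any competing coupling through \(\CMd{x}{\cdot}/\CMnt{x}\) to reduce to the one-dimensional case, where reflection is known to saturate the Aldous bound.

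The principal technical obstacle will be verifying that the constructed \(\cB\) really has the law of a \(\BAN\)-valued Brownian motion started at \(x\). Because \(\BAN\) is only a Banach space, the decomposition \(\B=(W/\CMnt{x}^2)x+\B^\perp\) does not arise from a topological direct sum, so the independence of \(\B^\perp\) and \(W\) must be established Gaussian-analytically rather than geometrically. Once that independence is in hand, the law of \(\cB\) matches that of a Brownian motion starting at \(x\), since both processes admit the same decomposition into an identically distributed complementary process and an independent one-dimensional Brownian motion started in the \(x\)-direction at height \(\CMnt{x}^2\) with instantaneous variance \(\CMnt{x}^2\).
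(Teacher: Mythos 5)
Your proposal is correct and follows essentially the same route as the paper: the converse via Feldman--Hajek singularity plus the Aldous inequality, and the forward direction via the Cameron--Martin reflection (your formula \(\cB(t)=(\widetilde W(t)/\CMnt{x}^2)x+\B^\perp(t)\) expands to exactly the paper's \(\cB(t)=x+\B(t)-2\CMd{x}{\B(t)}x/\CMnt{x}^2\)), with maximality read off from the reflection principle matching the total mass of the join. The only cosmetic difference is that you verify independence of \(W\) and \(\B^\perp\) by a direct Gaussian covariance computation, where the paper invokes its orthogonal-decomposition theorem for Banach-valued Brownian motion.
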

If the initial displacement does not 
lie in
\(\CM\) then 
in many cases a weaker form of coupling
is still available,
namely ``coupling at time \(\infty\)''.
\begin{thm*}[See Theorem \ref{thm:schauder_WnotH} below]\emph{
Let \(\BAN\) be a separable Banach space with norm \(\BANnt{\cdot}\). 
Consider a \(\BAN\)-valued Brownian motion \(\B\) started at \(0\),
such that the associated \(\CM\) contains an orthonormal basis which is also a Schauder basis for \(\BAN\).
For \emph{any} \(x\in \BAN\), one can construct another Brownian motion \(\cB\) started at \(x\) 
which almost surely couples with \(\B\) at time \(\infty\):
\begin{equation}\label{eq:coupling-at-time-infinity}
 \Prob{\BANn{\cB(t) -\B(t)} 
 \;\to\;
 0 \text{ as } t\to\infty}\quad=\quad1\,.
\end{equation}
}\end{thm*}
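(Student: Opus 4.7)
The plan is to exploit the dual role of $\{e_n\}$: its $\CM$-orthonormality yields a Karhunen--Lo\`eve decomposition $\B(t)=\sum_n\beta_n(t)\,e_n$ with independent standard real Brownian motions $\beta_n(t)=e_n^*(\B(t))$ (where $\{e_n^*\}\subset\BAN^*$ are the biorthogonal Schauder functionals), convergent almost surely in $\BAN$; the Schauder property meanwhile gives $x=\sum_n x_n\,e_n$ in $\BAN$-norm with $x_n=e_n^*(x)$, so that $\BANn{x-P_n x}\to 0$ for the partial-sum projections $P_n$, and every $P_n x$ lies in $\CM$.

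Next I would pass to a sparse subsequence $0=n_0<n_1<n_2<\cdots$ with $\BANn{x-P_{n_k}x}\leq 2^{-k}$ and build $\cB$ inductively in phases $[\tau_{k-1},\tau_k]$, $\tau_0=0$. On the $k$-th phase, set $\Delta_k:=P_{n_k}x-P_{n_{k-1}}x\in\CM$ and apply the Cameron--Martin coupling furnished by the previous theorem to couple $\B$ with an auxiliary Brownian motion $\B^{(k)}$ started at $\B(\tau_{k-1})+\Delta_k$, meeting it at the a.s.\ finite time $\tau_k$; then define $\cB(t):=\B^{(k)}(t)+(x-P_{n_k}x)$ on $[\tau_{k-1},\tau_k]$. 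Routine bookkeeping shows that $\cB$ is continuous at phase boundaries (since $\cB(\tau_k)=\B(\tau_k)+(x-P_{n_k}x)$ matches the required start of phase $k+1$), starts at $x$, has Brownian-motion increments on each phase, and yields an immersion coupling in the sense of \citet{Kendall-2013a}.

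By construction, $\B^{(k)}(t)-\B(t)=\alpha_k(t)\,\Delta_k/\CMn{\Delta_k}$ for a real reflected-difference process $\alpha_k$ beginning at $\CMn{\Delta_k}$ and vanishing at $\tau_k$, whose supremum over the phase has the law of $\CMn{\Delta_k}(1+R_k)$, where $R_k$ is a non-negative random variable with tail $\Prob{R_k>r}=1/(1+r)$. Consequently
\begin{equation*}
\BANn{\cB(t)-\B(t)}\;\leq\;(1+R_k)\,\BANn{\Delta_k}+\BANn{x-P_{n_k}x}\qquad\text{for }t\in[\tau_{k-1},\tau_k]\,.
\end{equation*}
The triangle inequality yields $\BANn{\Delta_k}\leq 3\cdot 2^{-k}$, and combining this with the above tail bound and Borel--Cantelli applied to $\{R_k\BANn{\Delta_k}>\epsilon\}$ for each $\epsilon>0$ forces $(1+R_k)\BANn{\Delta_k}\to 0$ almost surely; together with $\BANn{x-P_{n_k}x}\to 0$ this yields \eqref{eq:coupling-at-time-infinity}.

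The main obstacle, and the technical heart of the construction, is checking rigorously that this infinite concatenation of Cameron--Martin couplings assembles into a single $\BAN$-valued Brownian motion enjoying the claimed immersion property: the phases must be chained so that $\B^{(k+1)}$ on $[\tau_k,\tau_{k+1}]$ uses only the post-$\tau_k$ increments of $\B$, and the Paley--Wiener identification of the coupling direction $\Delta_k/\CMn{\Delta_k}$ inside $\CM$ must interact coherently with the Schauder decomposition used to index the blocks.
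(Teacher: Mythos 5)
Your construction is correct in outline but is genuinely different from the paper's. The paper couples \emph{in parallel}: it groups the Schauder basis into $\CM$-orthogonal finite-dimensional blocks, invokes Theorem \ref{thm:decomposition} to split $\B$ into independent block Brownian motions, reflection-couples every block simultaneously, and controls $\sum_n\sup_t\BANn{\cB_n(t)-\B_n(t)}$ via exactly the same two estimates you use (geometric decay $\BANn{\Delta}\lesssim 2^{-k}$ of the Schauder tails, and the $1/\lambda$ maximal bound for a nonnegative martingale absorbed at $0$). You instead couple \emph{in series}, concatenating one-dimensional Cameron--Martin reflections in the directions $\Delta_k$ over successive random phases. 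What the parallel route buys is that each block, once coupled, stays coupled, so the whole object is manifestly a single Brownian motion by Theorem \ref{thm:decomposition}; what your serial route buys is that it never superposes infinitely many reflections at once, at the price of the concatenation argument you rightly identify as the technical heart. Two points there need explicit treatment. First, the phases only cover $[0,\tau_\infty)$ with $\tau_\infty=\lim_k\tau_k$, and $\tau_\infty$ can be finite (certainly when $x\in\CM$); you must check that $\cB(t)\to\B(\tau_\infty)$ as $t\uparrow\tau_\infty$ (your phase-sup bounds give this) and then set $\cB=\B$ thereafter, otherwise the statement ``$t\to\infty$'' is vacuous on that event. Second, the verification that the concatenation is a $\BAN$-valued Brownian motion should go through the observation that each increment is $R_{\Delta_k}(\B(t)-\B(s))$ for a deterministic $\CM$-isometry $R_{\Delta_k}$ applied on a stochastic interval, so all $\BAN^*$-marginals are L\'evy-characterized Brownian motions and path-continuity is explicit.

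One consequence of your route deserves particular scrutiny. Your argument uses only that the partial sums $P_{n_k}x$ lie in $\CM$ and converge to $x$ in $\BAN$-norm; the $\CM$-orthonormality of the Schauder basis (and indeed the Schauder basis itself) plays no role, since any $\BAN$-dense sequence $y_k\in\CM$ with $\BANn{x-y_k}\leq 2^{-k}$ would serve, and $\CM$ is always dense in $\BAN$. Taken at face value this would resolve Question \ref{qn:open} affirmatively for every abstract Wiener space, which the paper regards as open; the paper's own method cannot dispense with orthogonality because its reflections must act simultaneously and independently. Either you have found a strictly stronger argument, or there is a hidden gap -- most plausibly in the assembly of the concatenated process into a genuine Brownian motion, or at $\tau_\infty$. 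You should write out those two steps in full detail before claiming the result, and make explicit which hypotheses of the theorem are actually consumed.
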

\noindent
The Schauder basis condition is not necessary for coupling at time \(\infty\): 
we will discuss various extensions
and conclude by raising the open 
Question \ref{qn:open}: 
whether in fact this Theorem might hold for general Banach spaces \(\BAN\)
regardless of 
whether 
the associated \(\CM\) contains a Schauder basis for \(\BAN\); and if not,
then what 
Banach space 
geometries
imply existence of Brownian coupling at time \(\infty\)?

\begin{rem}
When \eqref{eq:coupling-at-time-infinity} holds, in whatever Banach space context,
we say that the Brownian motions \(\B\) and \(\cB\) 
(begun at \(\B(0)\) and \(\widetilde\B(0)\))
\emph{couple at time \(\infty\)} almost surely.
\end{rem}
The paper is organized as follows. 
Section \ref{sec:gaussian_measures} surveys the basic theory of Gaussian measures and Brownian motions on Banach spaces.
Section \ref{sec:coupling_finite} treats the case when the initial displacement \(x\) to be in \(\CM\), 
while the case of \(x\in \BAN\setminus \CM\) is discussed in 
Section \ref{sec:coupling_schauder} 
(under the assumption that \(\BAN\) admits a Schauder basis which lies in \(\CM\) and which also forms an orthonormal basis for \(\CM\)).
Section \ref{sec:conc} discusses extensions and future work and raises the Open Question \ref{qn:open}.

\section{Gaussian measures and Banach-valued Brownian motion}\label{sec:gaussian_measures}
Recall the following
facts about Gaussian measures in Banach space.
Proofs can be found
for example in
\cite{Kuo-1975}, \cite{Hairer-2009}, \cite{Stroock-2011}, \cite{Eldredge-2012}.
A \emph{Gaussian probability measure} \(\mu\) on a separable Banach space \(\BAN\) is a Borel measure on \(\BAN\) such that
the push-forward 
\(\ell\#\mu\)
by any continuous linear functional \(\ell\in\BAN^*\) is a 
Gaussian probability measure on \(\Reals\).
If 
\(\ell\#\mu\)
has mean \(0\) for every \(\ell\) then \(\mu\) is \emph{centered}.
For simplicity, consider only  {centered} Gaussian measures
which are \emph{non-degenerate}: \(\ell\#\mu\) is non-degenerate for all non-zero \(\ell\). 

The \emph{covariance operator} of \(\mu\) is a map \(\COV_{\CM} : \BAN^* \times \BAN^*\to \Reals\) defined by
\[
\COV_{\CM} (\ell_1, \ell_2) \quad=\quad \int_\BAN\ell_1(x)\ell_2(x)\;\mu(\d{x})\,.
\]
This can be identified with a bounded linear operator \(\widehat{\COV}_{\CM} : \BAN^* \to \BAN^{* *}\) as follows:
\begin{equation}\label{eq:def_widehat_C_mu}
\left(\widehat{\COV}_{\CM} \ell_1\right)(\ell_2) \quad=\quad
\COV_{\CM} (\ell_1,\ell_2)\,.
\end{equation}
In particular Fernique's Theorem (\citealp[Section III.3]{Kuo-1975}) holds: 
the range of \(\widehat{\COV}_{\CM}\) lies in \(\BAN\hookrightarrow\BAN^{**}\) (where ``$\hookrightarrow$'' denotes a continuous embedding)
and \(\widehat{\COV}_{\CM} \ell\) 
is
a barycentre:
\begin{equation}\label{eq:int_x_l_mu}
\widehat{\COV}_{\CM} \ell \quad=\quad \int_{\BAN} \ell(x)\;x\;\mu(\d{x})
\quad=\quad
\int_\BAN \BANd{\ell}{x} \; x \; \mu(\d{x})\,.
\end{equation}
Here \(\BANd{\ell}{x}=\ell(x)\) expresses the duality between \(\BAN^*\) and \(\BAN\). 

Fernique's theorem actually shows that $\widehat{\COV}_{\CM}$ is bounded as linear operator $ \BAN^\ast \to \BAN$ \citep[Section 3.1]{Hairer-2009}.

\subsection{The Cameron-Martin space}
The (non-degenerate) Gaussian probability measure \(\mu\) is canonically associated 
with its \emph{Cameron-Martin} space \(\CM\); a Hilbert space densely and continuously embedded in the Banach space \(\BAN\),
so \(\CM\hookrightarrow\BAN\)
(\citealp[Section 3.2]{Hairer-2009}; \citealp[Section 4.3]{Eldredge-2012}). 
The triple \((\BAN , \CM , \mu)\) is an \emph{abstract Wiener space}.
More precisely, let 
\begin{equation}\label{eq:def_H_ring}
\mathring{\CM} \;=\;
\{ h\in \BAN \ : \ \textnormal{there exists }h^*\in \BAN^* \textnormal{ such that }\COV_{\CM}(h^* ,\ell)=\ell(h), \ \textnormal{for all }\ell \in \BAN^*\}\,.
\end{equation}
Then \(\mathring{\CM}\) is a pre-Hilbert space when endowed with the inner product defined by
\begin{equation}\label{eq:def_scal_prod}
\CMd{h}{k} \quad=\quad
\COV_{\CM}(h^* ,k^*)\,.
\end{equation}
Its Hilbert-space completion under the inner product \eqref{eq:def_scal_prod} 
is the \emph{Cameron-Martin space} of \(\BAN\) furnished with \(\mu\).
As a Hilbert space, \(\CM\) has the  norm
\begin{equation}\label{eq:norm_H}
\CMn{h}^2 \quad=\quad
\CMd{h}{h} \quad=\quad
\COV_{\CM}(h^* ,h^*)\,.
\end{equation}
Fernique's Theorem and completeness of \(\BAN\) imply that the embedding \(A:\CM \hookrightarrow \BAN\)
is continuous and injective
\citep[Section 3]{Hairer-2009}. Note that \(A^*A\) is the extension of \(\COV_\CM\) to \(\CM\times\CM\)
\begin{rem}\label{rem:dual_into_CM_space}
$\mathring{\CM}\) coincides with the range of \(\widehat{\COV}_\mu\) defined in \eqref{eq:def_widehat_C_mu}. 
Furthermore, in view of \eqref{eq:def_widehat_C_mu} and  \eqref{eq:def_H_ring}, 
we have \(\BAN^* \hookrightarrow \CM \hookrightarrow \BAN\) using continuous embeddings of dense image.
\end{rem}
In particular, for any \(x\in \mathring\CM\) there is an element \(x^*\in \BAN^*\) associated to \(x\) for which 
\begin{equation}\label{eq:c_mu}
\COV_{\CM}(x^*,\ell)\quad=\quad
\ell(x) \quad=\quad \BANd{\ell}{x}, \quad \text{ for all } \ell\in \BAN^*.
\end{equation}
The following result 
is
key 
for analyzing the possibility of successful Brownian coupling.%
\begin{thm}[Feldman-Hajek Theorem, see {\citealp[Theorem 3.1]{Kuo-1975}}]\label{thm:cameron-martin}
For \(w\in \BAN\) define the translation map \(T_w: \BAN \to \BAN\) by \(T_w(x)=x+w\). 
If \(w\in \CM\) then the push-forward measure 
\({T_w}\#\mu\)
is absolutely continuous with respect to \(\mu\),
while if \(w\not\in \CM\) then the push-forward measure and \(\mu\) are mutually singular.
Moreover the join of the probability distributions \(\mu\)
and \({T_w}\#\mu\) has total mass given by
twice the probability that a standard normal random variable exceeds the value \(\tfrac12\CMn{w}\).
\end{thm}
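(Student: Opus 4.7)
The plan is to split the statement into three parts and treat them in turn: (i) absolute continuity when \(w\in\CM\), (ii) mutual singularity when \(w\notin\CM\), and (iii) the explicit formula for the total mass of the join \(\mu\wedge {T_w}\#\mu\).

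For (i), I would first work on the pre-Hilbert subspace \(\mathring{\CM}\) of Remark \ref{rem:dual_into_CM_space}: here every \(w\) comes equipped with a companion \(w^*\in\BAN^*\) satisfying \eqref{eq:c_mu}, and by \eqref{eq:norm_H} the functional \(w^*\) is a centred Gaussian on \((\BAN,\mu)\) with variance \(\CMn{w}^2\). I would then propose the Radon-Nikodym density
\[
\rho_w(x) \quad=\quad \exp\!\left(w^*(x)-\tfrac12\CMn{w}^2\right)
\]
and verify the identity \({T_w}\#\mu=\rho_w\cdot\mu\) by matching characteristic functionals: the exponential moment formula for Gaussians delivers \(\int e^{i\ell(x)}\rho_w(x)\,\mu(\d{x})=e^{i\ell(w)}\int e^{i\ell(x)}\mu(\d{x})\) for every \(\ell\in\BAN^*\), which uniquely characterises the translated measure. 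To pass from \(\mathring{\CM}\) to its completion \(\CM\), I would invoke the Paley-Wiener isometry that extends the assignment \(w\mapsto w^*\) to an isometric embedding \(\CM\hookrightarrow L^2(\mu)\); this gives a well-defined random variable \(w^*\) for every \(w\in\CM\) and validates the same formula in the limit.

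For (ii), I would choose a \(\CM\)-orthonormal basis \(\{e_n\}\subseteq\mathring{\CM}\) (possible because \(\mathring{\CM}\) is dense in \(\CM\)), so that the dual elements \(e_n^*\) furnish an independent sequence of standard Gaussians under \(\mu\). Setting \(\xi_n=e_n^*(x)\), the translation \(T_w\) shifts the \(n\)-th coordinate by \(\CMd{w}{e_n}\). Kakutani's product criterion for the resulting infinite product of one-dimensional Gaussian measures gives absolute continuity precisely when \(\sum_n\CMd{w}{e_n}^2<\infty\), i.e.\ when \(w\in\CM\); if \(w\notin\CM\) this sum diverges, and Kolmogorov's zero-one law applied to the tail \(\sigma\)-field of \(\{\xi_n\}\) upgrades failure of absolute continuity to mutual singularity.

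For (iii), once the density in (i) is in hand, the join \(\mu\wedge {T_w}\#\mu\) has total mass
\[
\int_\BAN \min\!\left(1,\rho_w(x)\right)\,\mu(\d{x})\,.
\]
Since \(w^*\) has the same law as \(\CMn{w}\cdot Z\) for a standard normal \(Z\) under \(\mu\), the integrand equals \(1\) on \(\{Z>\tfrac12\CMn{w}\}\) and \(\exp(\CMn{w}Z-\tfrac12\CMn{w}^2)\) on the complement. A Girsanov-type shift \(Z\mapsto Z-\CMn{w}\) shows that both contributions equal \(\Prob{Z>\tfrac12\CMn{w}}\), yielding the advertised total mass \(2\,\Prob{Z>\tfrac12\CMn{w}}\). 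The principal obstacle is the singularity half of (ii): no density formula is available to compare directly, so one must construct the basis inside \(\mathring{\CM}\) and then argue the dichotomy cleanly via Kakutani together with a tail zero-one law. Parts (i) and (iii) then reduce to a Cameron-Martin computation and an explicit one-dimensional Gaussian integral respectively.
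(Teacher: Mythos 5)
The paper offers no proof of this statement: it is imported directly from \citet[Theorem 3.1]{Kuo-1975} (the Feldman--Hajek/Cameron--Martin dichotomy plus the standard total-variation formula), so there is no in-paper argument to compare against. Your sketch is the classical proof and is essentially sound: the density \(\exp\left(w^*(x)-\tfrac12\CMn{w}^2\right)\) verified on \(\mathring{\CM}\) by matching characteristic functionals and extended via the Paley--Wiener isometry; Kakutani's dichotomy for the singular half; and the one-dimensional Gaussian computation yielding twice the upper tail at \(\tfrac12\CMn{w}\). Two steps carry hidden weight and would need to be made explicit. First, in part (ii) the quantity \(\CMd{w}{e_n}\) is undefined when \(w\notin\CM\); the coordinate shift is \(\BANd{e_n^*}{w}\), and the asserted equivalence ``\(\sum_n\BANd{e_n^*}{w}^2<\infty\) if and only if \(w\in\CM\)'' is itself a nontrivial characterization of the Cameron--Martin space: the forward implication is Parseval, but the converse requires the dual description of \(\CM\) as the set of \(h\) with \(\sup\{\ell(h):\COV_{\CM}(\ell,\ell)\le1\}<\infty\), or an approximation argument showing that square-summability of the coordinates forces \(w\) to equal the \(\CM\)-element \(\sum_n\BANd{e_n^*}{w}\,e_n\). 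Second, passing the density formula from \(\mathring{\CM}\) to its completion needs uniform integrability of the exponentials along an approximating sequence, not merely \(L^2(\mu)\)-convergence of the \(w_n^*\). Both points are standard and repairable; a slightly more direct route for the singular half, avoiding the basis and Kakutani altogether, is to choose \(\ell_n\in\BAN^*\) of unit variance with \(\ell_n(w)\to\infty\) (possible exactly when \(w\notin\CM\)) and separate \(\mu\) from \(T_w\#\mu\) along the events \(\{\ell_n\le\tfrac12\ell_n(w)\}\) by Borel--Cantelli.
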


\subsection{Brownian motion on a Banach space}
\citet[Chapter 8]{Stroock-2011} 
uses these
considerations to 
define
Brownian motion (a process with stationary Gaussian increments independent of the past, and continuous sample paths).
Let \(K(\CM)\) be the Hilbert space of absolutely continuous functions \(h:[0,\infty ) \to \CM\) 
with almost-everywhere defined derivative \(\dot{h}\),
such that \(h(0)=0\) and \(\|h\|_{K(\CM)}^2=\|\dot{h}\|_{L^2([0,\infty ),\CM)}^2=\int_0^\infty\CMnt{\dot{h}}^2\d{t}<\infty\).
Define
\[
\mathcal{C}_\BAN\quad=\quad
\left \{\theta:[0,\infty ) \to \BAN \textnormal{, such that } 
\theta\textnormal{ is continuous and } \lim_{t\to \infty}\frac{\BANn{\theta(t)}}{t}=0\right \}
\,,
\]
which becomes a Banach space when endowed with the norm 
\[
\|\theta\|_{\mathcal{C}_\BAN}\quad=\quad
\sup_{t\geq 0}\frac{1}{1+t}{\BANn{\theta}}\,.
\]
Each centered non-degenerate Gaussian measure then corresponds to a Brownian motion:
\begin{thm}[\citealp{Stroock-2011}, Theorem 8.6.1] 
Given \(K(\CM)\) and \(\mathcal{C}_\BAN\) as above, 
there is a {unique} measure \(\mu_{\BAN}\) such that \((\mathcal{C}_\BAN,K(\CM),\mu_{\BAN})\)
is an abstract Wiener space.
\end{thm}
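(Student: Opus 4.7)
The plan is to construct $\mu_\BAN$ explicitly as the law on $\mathcal{C}_\BAN$ of a Lévy--Ciesielski-type process $\B$, verify its Gaussian and independent-increments structure, and finally identify $K(\CM)$ as the Cameron-Martin space of $\mu_\BAN$. Fix an orthonormal basis $\{e_n\}_{n\geq1}$ of $L^2([0,\infty),\Reals)$ (for example a rescaled Haar system), set $\varphi_n(t)=\int_0^t e_n(s)\,\d{s}$, and let $\{\xi_n\}_{n\geq1}$ be iid $\BAN$-valued samples from $\mu$. Define
\begin{equation*}
\B(t) \;=\; \sum_{n\geq1}\xi_n\,\varphi_n(t).
\end{equation*}
Almost-sure uniform convergence on every compact time interval, yielding continuous $\BAN$-valued sample paths, follows from a Borel--Cantelli argument in which Fernique's theorem controls the tails of $\BANnt{\xi_n}$ and the scale-localisation of $\{\varphi_n\}$ handles the uniform supremum. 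By construction $\B(0)=0$, and orthonormality of $\{e_n\}$ in $L^2$ yields
\begin{equation*}
\COV(\ell_1(\B(s)),\ell_2(\B(t))) \;=\; \min(s,t)\,\COV_{\CM}(\ell_1,\ell_2) \qquad\text{for all }\ell_1,\ell_2\in\BAN^*,
\end{equation*}
so that $\B$ has stationary independent Gaussian increments with $\Law{\B(1)}=\mu$.

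Next I would identify the Cameron-Martin space of $\mu_\BAN:=\Law{\B}$ on $\mathcal{C}_\BAN$. Applying the definitions \eqref{eq:def_H_ring} and \eqref{eq:norm_H} to $\mu_\BAN$, a direct calculation using the covariance formula above shows that the image of $\mathcal{C}_\BAN^*$ under the corresponding covariance operator consists precisely of paths $h:[0,\infty)\to\CM$ with $h(0)=0$, absolutely continuous with $\dot{h}\in L^2([0,\infty),\CM)$, and that the induced Hilbert-space inner product is $\int_0^\infty\CMd{\dot{h}}{\dot{k}}\d{t}$; hence the Cameron-Martin space of $\mu_\BAN$ coincides with $K(\CM)$, and a parallel check confirms the required continuous dense embedding $K(\CM)\hookrightarrow\mathcal{C}_\BAN$. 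Uniqueness of $\mu_\BAN$ is then automatic: any Borel probability measure on the separable space $\mathcal{C}_\BAN$ is determined by its cylindrical finite-dimensional distributions, and these are forced by the abstract-Wiener-space requirement to match those of $\B$.

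The main obstacle I expect is verifying that the trajectories of $\B$ almost surely lie in $\mathcal{C}_\BAN$, and not merely in $C([0,\infty),\BAN)$: one must simultaneously establish uniform convergence of the series on each compact interval \emph{and} the sublinear growth condition $\BANnt{\B(t)}/t\to 0$ implicit in the definition of $\mathcal{C}_\BAN$. The standard route uses time inversion, exploiting that $t\mapsto t\,\B(1/t)$ is again a Brownian motion and hence continuous at $0$; in the present Banach-valued setting this requires a careful dyadic decomposition of $[0,\infty)$ combined with Fernique-type tail estimates to transport the almost-sure continuity through the inversion, controlled throughout in the $\mathcal{C}_\BAN$-norm rather than in a mere supremum norm.
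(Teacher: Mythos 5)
The paper offers no proof of this statement: it is imported verbatim from \citet[Theorem 8.6.1]{Stroock-2011}, so there is nothing internal to compare against. Your proposal is, in outline, a faithful reconstruction of the standard route (and essentially Stroock's own): a L\'evy--Ciesielski series $\sum_n \xi_n\varphi_n(t)$ with iid $\mu$-distributed coefficients, Fernique's theorem supplying the Gaussian tail bounds that make Borel--Cantelli work, the Parseval computation giving covariance $\min(s,t)\,\COV_\CM(\ell_1,\ell_2)$, time inversion for the sublinear-growth condition needed for membership of $\mathcal{C}_\BAN$, and uniqueness from the fact that $\mathcal{C}_\BAN^*$ generates the Borel $\sigma$-algebra of the separable space $\mathcal{C}_\BAN$. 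You also correctly identify the genuinely delicate point, namely convergence in the weighted $\mathcal{C}_\BAN$-norm rather than merely uniformly on compacts. One small correction: the image of $\mathcal{C}_\BAN^*$ under the covariance operator is only a \emph{dense subspace} of $K(\CM)$ (spanned by paths of the form $s\mapsto\min(s,t)\,\widehat{\COV}_\CM\ell$ and their finite combinations), not all of it; the Cameron--Martin space $K(\CM)$ is obtained as its completion, exactly as in \eqref{eq:def_H_ring}--\eqref{eq:norm_H}. This does not affect the validity of the argument, only the phrasing of that step.
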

\begin{thm}[\citealp{Stroock-2011}, Theorem 8.6.6] 
Choose \(\Omega=\mathcal{C}_\BAN\). 
Let \(\mathcal{F}\) be the Borel \(\sigma\)-algebra of \(\BAN\), 
and let \(\mathcal{F}_t\) be the \(\sigma\)-algebra generated by 
\(\{\theta(s) \, : \,s\in [0,t]\}\). 
Then the triple \(((\theta(t):t\geq0),\mathcal{F}_t,\mu_{\BAN})\) is a \(\BAN\)-Brownian motion.
Conversely, if \(((B(t):t\geq0),\mathcal{F}_t,\mathbb{P})\) is any \(\BAN\) Brownian motion, 
then \(\Prob{B(\cdot) \in \mathcal{C}_\BAN}=1\)
and \(\mu_{\BAN}\) is the \(\mathbb{P}\) distribution of \(\omega \mapsto B(\cdot ,\omega)\).
\end{thm}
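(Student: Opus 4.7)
The plan is to handle the two directions separately, leveraging the previous theorem which identifies $(\mathcal{C}_\BAN,K(\CM),\mu_\BAN)$ as an abstract Wiener space. The forward direction requires verifying that the coordinate process under $\mu_\BAN$ enjoys all the Brownian properties, while the converse requires showing that any $\BAN$-Brownian motion has paths in $\mathcal{C}_\BAN$ almost surely, after which uniqueness of Gaussian measures identifies the two laws.

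For the forward direction, continuity of $\theta$ is immediate from the definition of $\mathcal{C}_\BAN$, and $\theta(0)=0$ holds $\mu_\BAN$-almost surely because every element of $K(\CM)$ vanishes at zero, forcing the covariance of evaluation at time $0$ to be trivial. The core task is to identify the covariance structure. For $\ell\in\BAN^*$ and $u\geq 0$, the functional $\theta\mapsto\ell(\theta(u))$ is continuous and linear on $\mathcal{C}_\BAN$, and I would check that under the duality between $\mathcal{C}_\BAN^*$ and $K(\CM)$ it corresponds to the path $r\mapsto\min(r,u)\,\ell^{\sharp}$, where $\ell^{\sharp}\in\CM$ is the dual of $\ell$ under the embedding $\BAN^*\hookrightarrow\CM$ from Remark~\ref{rem:dual_into_CM_space}. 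A short computation in $K(\CM)$ then yields $\mu_\BAN$-covariance $\min(s,t)\,\COV_\CM(\ell_1,\ell_2)$ for the pair $(\ell_1(\theta(s)),\ell_2(\theta(t)))$, which matches the covariance of Brownian increments with unit-time law $\mu$. Joint Gaussianity of finitely many increments is immediate from Gaussianity of $\mu_\BAN$ on $\mathcal{C}_\BAN$, and independence of increments on disjoint intervals follows from vanishing of their covariances.

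For the converse, let $B$ be any $\BAN$-Brownian motion. Continuity of paths is part of the definition, so the only thing to establish is $\BANnt{B(t)}/t\to 0$ almost surely. Here I would exploit the scaling identity $\BANnt{B(t)}\stackrel{\mathrm{d}}{=}\sqrt{t}\,\BANnt{B(1)}$ together with Fernique's theorem, which provides exponential integrability of $\BANnt{B(1)}$. These yield enough control to apply Borel-Cantelli along the geometric subsequence $t_n=2^n$, and a Banach-valued maximal inequality for the Brownian increments $B(t)-B(t_n)$ over $t\in[t_n,t_{n+1}]$ then bridges between dyadic times to deliver the full continuous limit. Once $B(\cdot)\in\mathcal{C}_\BAN$ almost surely, the $\mathbb{P}$-law of the path is a centered Gaussian measure on $\mathcal{C}_\BAN$ with the same covariance structure as $\mu_\BAN$ (computed exactly as in the forward direction), so uniqueness of Gaussian measures with prescribed covariance forces the two laws to coincide.

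The hard part is the tail-control step $\BANnt{B(t)}/t\to 0$ in the converse: in finite dimension this is the classical Brownian strong law, but in a general separable Banach space one has to combine Fernique's exponential inequality with a maximal inequality for Banach-valued martingales to upgrade subsequential convergence along $t_n=2^n$ to the continuous $t\to\infty$ limit. Everything else is bookkeeping within the abstract Wiener space framework, using the chain of duality embeddings $\BAN^*\hookrightarrow\CM\hookrightarrow\BAN$ and its analogue $\mathcal{C}_\BAN^*\hookrightarrow K(\CM)\hookrightarrow\mathcal{C}_\BAN$.
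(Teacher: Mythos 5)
The paper does not prove this statement at all: it is quoted verbatim from \citet[Theorem 8.6.6]{Stroock-2011} and used as imported background, so there is no in-paper argument to compare yours against. Judged on its own, your sketch is essentially the standard (and, as far as I can tell, essentially Stroock's) argument, and I see no genuine gap. In the forward direction the identification of the evaluation functional $\theta\mapsto\ell(\theta(u))$ with the Cameron--Martin path $r\mapsto\min(r,u)\,\ell^{\sharp}$ is exactly the right computation, and it delivers the covariance $\min(s,t)\,\COV_{\CM}(\ell_1,\ell_2)$, hence stationary independent Gaussian increments with unit-time law $\mu$. Two places where you should be explicit: first, in the converse, the ``Banach-valued maximal inequality'' you invoke is available either as L\'evy's inequality for symmetric independent increments (applied to finite partitions of $[t_n,t_{n+1}]$ and passed to the limit by path continuity) or as Doob's inequality for the nonnegative submartingale $\BANnt{B(t)}$; combined with Fernique's exponential bound and scaling this does give summable tails along $t_n=2^n$, so the bridge step closes. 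Second, when you conclude that the two laws on $\mathcal{C}_\BAN$ coincide, it is cleaner to note that the evaluation functionals $\ell\circ\theta(u)$ generate the Borel $\sigma$-algebra of the separable space $\mathcal{C}_\BAN$, so agreement of finite-dimensional distributions already identifies the measures, rather than appealing to uniqueness of Gaussian measures via the full dual $\mathcal{C}_\BAN^*$ (whose description you would otherwise need). An alternative, marginally more elementary route to $\BANnt{B(t)}/t\to0$ is the Banach-valued strong law of large numbers applied to the i.i.d.\ increments $B(k)-B(k-1)$, with the same oscillation control on unit intervals; either version is fine.
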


\subsection{Decomposition of Brownian motions}\label{sec:decomposition_BMs}
\citet{Stroock-2008} 
connects
\(\CM\)-orthogonal decompositions 
to Brownian independence.
\begin{thm}\label{thm:decomposition}
Suppose the Cameron-Martin space \(\CM\) 
admits an orthogonal decomposition into finite-dimensional subspaces 
\(\CM=\CM^{(1)}\oplus\CM^{(2)}\oplus\ldots\).
The corresponding \(\BAN\)-valued Brownian motion \(\B\) decomposes as a tuple of independent Brownian motions
\(\B=(\B_1,\B_2, \ldots)\) with \(\B_\ell\) living on the \(\BAN\)-closure of \(\CM^{(\ell)}\);
the resulting sum converges not only in \(L^2(\BAN,\Reals)\) but also almost surely in \(\BAN\).
\end{thm}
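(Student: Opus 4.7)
The plan is to assemble the decomposition coordinate by coordinate within each finite-dimensional subspace and then invoke an Itô--Nisio-type argument to upgrade $L^2$ convergence to almost sure convergence in the Banach norm.

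First, for each $\ell$ pick a $\CM$-orthonormal basis $\{e_{\ell,1},\dots,e_{\ell,d_\ell}\}$ of $\CM^{(\ell)}$; concatenation across $\ell$ then yields an $\CM$-orthonormal basis of $\CM$. Each $e_{\ell,j}$ lies in $\CM \subset \BAN$, and Remark \ref{rem:dual_into_CM_space} together with density of the embedding $\BAN^* \hookrightarrow \CM$ lets one realize $\CMd{e_{\ell,j}}{\cdot}$ as a measurable linear functional on the abstract Wiener space. Applying this functional pathwise to $\B$ produces real-valued coefficient processes $Z_{\ell,j}(t) = \CMd{e_{\ell,j}}{\B(t)}$; a covariance computation based on \eqref{eq:norm_H} identifies each $Z_{\ell,j}$ as a standard real Brownian motion, while $\CM$-orthogonality combined with joint Gaussianity forces the whole family $\{Z_{\ell,j}\}$ to be mutually independent. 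Setting $\B_\ell(t) = \sum_{j=1}^{d_\ell} Z_{\ell,j}(t)\,e_{\ell,j}$ gives a Brownian motion supported in $\CM^{(\ell)}$, and the resulting family $(\B_\ell)_\ell$ is independent.

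It remains to show $\B(t) = \sum_\ell \B_\ell(t)$ in the two claimed senses. Parseval in $\CM$ shows that the tail $\sum_{\ell>N}\B_\ell(t)$ has Cameron--Martin variance equal to the residual of a convergent series, and the continuous embedding $\CM \hookrightarrow \BAN$ guaranteed by Fernique's theorem transfers this to a vanishing bound on the corresponding second moment in $\BANnt{\cdot}$, yielding the $L^2$ convergence.

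The main obstacle is upgrading to almost sure convergence in $\BAN$, since $\BANnt{\cdot}$ is not induced by an inner product and the elementary Hilbert-space argument does not carry over. Here I would invoke the Itô--Nisio theorem on sums of independent symmetric Banach-space valued summands, which states that convergence in probability, almost sure convergence, and convergence in distribution to a Radon limit are all equivalent. Each partial sum is centered Gaussian with tight distribution (Fernique again), and the already-established $L^2$ convergence implies convergence in probability; Itô--Nisio then promotes this to almost sure convergence in $\BAN$. The same argument applied in the path-space Banach space $\mathcal{C}_\BAN$ introduced in Section \ref{sec:gaussian_measures} upgrades the conclusion to path-level almost sure convergence if desired.
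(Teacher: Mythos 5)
There is a genuine gap at the $L^2$ convergence step, and it is exactly the central subtlety of abstract Wiener spaces. You argue that the tail $\sum_{\ell>N}\B_\ell(t)$ has small Cameron--Martin variance by Parseval and that the continuous embedding $\CM\hookrightarrow\BAN$ then transfers this to a small second moment in $\BANnt{\cdot}$. But the tail is almost surely \emph{not} an element of $\CM$: its ``$\CM$-variance'' is $t\sum_{\ell>N}\dim\CM^{(\ell)}=\infty$, since the coefficient processes $Z_{\ell,j}(t)$ all have variance $t$ and there are infinitely many of them. Parseval in $\CM$ therefore produces a divergent series, not the residual of a convergent one, and the bound $\mathbb{E}\bigl[\BANnt{\cdot}^2\bigr]\leq C\,\mathbb{E}\bigl[\CMnt{\cdot}^2\bigr]$ is vacuous. (This is the same reason $\mu(\CM)=0$ in infinite dimensions.) Since your Itô--Nisio step is fed by ``convergence in probability implied by the already-established $L^2$ convergence'', the almost sure convergence is not established either.

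The architecture can be repaired in two ways. The paper's own route is different: it identifies the partial sum $S_n(\theta)=\sum_{\ell\leq n}\B_\ell$ as the conditional expectation $\mathbb{E}_\mu[\theta\mid\mathcal{F}_n]$ for the filtration generated by the Paley--Wiener images of the first $n$ blocks, checks that the Borel $\sigma$-algebra of $\BAN$ lies in the $\mu$-completion of $\sigma(\bigcup_n\mathcal{F}_n)$ (using separability), and then invokes the Banach-valued martingale convergence theorem (Theorem \ref{thm:Marcinkiewicz}), which delivers almost sure and $L^2(\BAN,\mu)$ convergence simultaneously with no need to control tails in $\CM$-norm. Alternatively, your Itô--Nisio route can be made to work, but it must be fed the correct hypothesis: for each fixed $\lambda\in\BAN^*$ one applies Parseval to $\iota(\lambda)\in\CM$ (a single element of $\CM$, where the series genuinely converges) to get $\BANd{\lambda}{S_n}\to\BANd{\lambda}{\theta}$ in $L^2(\Reals)$, and combines this coordinatewise convergence with the fact that the candidate limit law $\mu$ is Radon; the Itô--Nisio theorem for sums of independent symmetric summands then yields almost sure convergence in $\BAN$, after which $L^2(\BAN,\mu)$ convergence follows from Fernique-type integrability. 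Your first paragraph (coordinate Brownian motions, independence from $\CM$-orthogonality plus joint Gaussianity) is correct and matches the paper's Steps 2--3 and 5; only the passage from $\CM$-estimates to $\BAN$-estimates needs to be replaced.
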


The proof of Theorem \ref{sec:decomposition_BMs} uses a fundamental result: 
\begin{thm}[Banach version of Marcinkiewicz' Theorem]\label{thm:Marcinkiewicz}
Let \(X\) be an \(\BAN\)-valued random variable, measurable with respect to a \(\sigma\)-algebra \(\mathcal{F}\).
Suppose that \(X\in L^p(\BAN, \Reals)\) for some \(p\in [1,\infty)\).
For any non-decreasing sequence \(\{\mathcal{F}_n:n=1,2,\ldots\}\) of sub-$\sigma\)-algebras of \(\mathcal{F}$,
\[
\lim_{n\to \infty}\mathbb{E}_\mu\left[X\mid \mathcal{F}_n\right] \quad=\quad 
\mathbb{E}_\mu\left[X \Big| \bigcup_{n=1}^\infty \mathcal{F}_n\right] \quad \text{both \(\mu\)--almost surely and also in }L^p(\BAN,\Reals)\,.
\]
Furthermore, if \(X\) is \(\bigcup_{n=1}^\infty \mathcal{F}_n\)-measurable then 
\[
\lim_{n\to \infty }\mathbb{E}_\mu\left[X\mid \mathcal{F}_n\right] \quad=\quad X \quad \text{both \(\mu\)--almost surely and also in }L^p(\BAN,\Reals)\,.
\]
\end{thm}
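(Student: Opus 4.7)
The plan is to reduce the Banach-valued statement to the classical scalar L\'evy upward martingale convergence theorem via simple-function approximation, controlling the approximation error with a Jensen-based transfer of Doob's maximal inequality to the \(\BAN\)-valued setting. By the tower property for Bochner conditional expectations one has \(\mathbb{E}_\mu[X\mid\mathcal{F}_n]=\mathbb{E}_\mu[Y\mid\mathcal{F}_n]\) with \(Y=\mathbb{E}_\mu[X\mid\mathcal{F}_\infty]\) and \(\mathcal{F}_\infty=\sigma(\bigcup_n\mathcal{F}_n)\), so the first assertion reduces to the second: it suffices to prove that whenever \(X\) is \(\mathcal{F}_\infty\)-measurable and belongs to \(L^p(\BAN,\Reals)\), one has \(\mathbb{E}_\mu[X\mid\mathcal{F}_n]\to X\) both \(\mu\)-almost surely and in \(L^p\).

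Next, I would exploit the separability of \(\BAN\) to approximate the Bochner-integrable \(X\) by \(\mathcal{F}_\infty\)-measurable simple functions \(X^{(k)}=\sum_{i=1}^{N_k}x_i^{(k)}\mathbf{1}_{A_i^{(k)}}\), with \(x_i^{(k)}\in\BAN\) and \(A_i^{(k)}\in\mathcal{F}_\infty\), such that \(X^{(k)}\to X\) in \(L^p(\BAN,\Reals)\). Linearity together with the classical scalar L\'evy upward theorem applied to each indicator then yields \(\mathbb{E}_\mu[X^{(k)}\mid\mathcal{F}_n]\to X^{(k)}\) as \(n\to\infty\), both in \(L^p\) and \(\mu\)-almost surely, for every fixed \(k\).

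The essential technical step is a \(\BAN\)-valued Doob maximal inequality. The Bochner version of conditional Jensen gives \(\BANnt{\mathbb{E}_\mu[Z\mid\mathcal{F}_n]}\le\mathbb{E}_\mu[\BANnt{Z}\mid\mathcal{F}_n]\), so \((\BANnt{\mathbb{E}_\mu[Z\mid\mathcal{F}_n]})_{n}\) is a non-negative scalar submartingale dominated by the closable scalar martingale \((\mathbb{E}_\mu[\BANnt{Z}\mid\mathcal{F}_n])_{n}\). Applying scalar Doob \(L^p\) maximal inequality (for \(p>1\); in the borderline case \(p=1\) the weak-\(L^1\) maximal inequality combined with uniform integrability of the closable sequence suffices) yields
\[
\mathbb{E}_\mu\!\left[\sup_{n}\BANnt{\mathbb{E}_\mu[Z\mid\mathcal{F}_n]}^p\right] \;\le\; \left(\frac{p}{p-1}\right)^p\mathbb{E}_\mu\!\left[\BANnt{Z}^p\right].
\]

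Finally, I would combine everything via a \(3\epsilon\) argument. Decomposing
\[
\mathbb{E}_\mu[X\mid\mathcal{F}_n]-X \;=\; \mathbb{E}_\mu\bigl[X-X^{(k)}\mid\mathcal{F}_n\bigr] \;+\; \bigl(\mathbb{E}_\mu[X^{(k)}\mid\mathcal{F}_n]-X^{(k)}\bigr) \;+\; (X^{(k)}-X),
\]
one chooses \(k\) with \(\|X-X^{(k)}\|_{L^p}<\epsilon\); the maximal inequality applied with \(Z=X-X^{(k)}\) bounds the first term uniformly over \(n\) by a constant multiple of \(\epsilon\), the middle term vanishes as \(n\to\infty\) by the preceding step, and the last is \(O(\epsilon)\) in \(L^p\). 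Both the \(L^p\) and, via a diagonal subsequence, the \(\mu\)-almost sure convergence follow. The main obstacle is the \(\BAN\)-valued maximal inequality: once the Jensen-type domination is in hand, the rest is a routine transfer of the scalar theory, with separability of \(\BAN\) ensuring that all Bochner integrals and simple-function approximations are well-defined throughout.
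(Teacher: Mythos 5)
The paper does not prove this theorem itself: it simply cites \citet[Theorems 1.14 and 1.30]{Pisier-2016} for the Banach-valued statement and \citet[Chapter 5]{Stroock-2011} for the scalar original. Your argument is the standard proof that lies behind those references, and it is essentially correct: reduce to the closed case via the tower property, approximate the Bochner-integrable \(X\) by \(\mathcal{F}_\infty\)-measurable simple functions (possible by separability of \(\BAN\)), apply the scalar L\'evy upward theorem coordinate-by-indicator, and control the error uniformly in \(n\) using the contractivity \(\BANnt{\mathbb{E}_\mu[Z\mid\mathcal{F}_n]}\le\mathbb{E}_\mu[\BANnt{Z}\mid\mathcal{F}_n]\) together with Doob's maximal inequality (weak type \((1,1)\) when \(p=1\)). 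One small presentational point: the phrase ``via a diagonal subsequence'' is not the right mechanism for the almost-sure conclusion, since \(L^p\) convergence only yields almost-sure convergence along a subsequence. What actually closes the argument is the maximal inequality you already have: for each \(k\), almost surely
\[
\limsup_{n\to\infty}\BANnt{\mathbb{E}_\mu[X\mid\mathcal{F}_n]-X}
\;\le\;
\sup_{n}\mathbb{E}_\mu\bigl[\BANnt{X-X^{(k)}}\,\big|\,\mathcal{F}_n\bigr]+\BANnt{X^{(k)}-X}\,,
\]
and the right-hand side tends to \(0\) in probability as \(k\to\infty\) by the (weak or strong) maximal inequality and Markov's inequality, so the left-hand side, which does not depend on \(k\), vanishes almost surely. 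With that correction the proof is complete and matches the approach of the cited source.
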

A proof of Theorem \ref{thm:Marcinkiewicz} is given in \citet[Theorems 1.14 and 1.30]{Pisier-2016};
a proof of the original (non-Banach) Marcinkiewicz' Theorem is given in \citet[Chapter 5]{Stroock-2011}.

We spell out the proof of Theorem \ref{thm:decomposition} to help establish notation.
\begin{proof}[Proof of Theorem \ref{thm:decomposition}]\text{}


\noindent\emph{Step 1.} 
\emph{Separability means that \(\BAN^*\) generates the Borel \(\sigma\)-algebra of \(\BAN\).}
The elements of \(\BAN^*\) are continuous functions on \(\BAN\), hence are measurable with respect to the Borel \(\sigma\)-algebra \(\Borel_\BAN\) of \(\BAN\).
Being separable, 
\(\BAN\)
is generated by a countable system of open balls; 
by separability, each of these balls can be expressed as the intersection of countably many half-spaces of \(\BAN\). 
Hence \(\Borel_\BAN\) is the \emph{smallest} \(\sigma\)-algebra 
making
all maps \(\lambda : \theta\mapsto \BANd{\theta}{\lambda}=\lambda(\theta)\) measurable
for all \(\lambda\in\BAN^*\).

\medskip

\noindent\emph{Step 2.} 
\emph{There is a natural isometry embedding \(\CM\) in \(L^2(\BAN,\mu)\).}
From \eqref{eq:def_H_ring}, for every \(\lambda\in \BAN^*\) there is a \(h_\lambda\in \CM\) such that 
\(\lambda(h)=\BANd{\lambda}{h}=\CMd{h}{h_\lambda}\).
Therefore, there is a natural isometric injection
\[
\mathcal{I}:\BAN^*\quad\to\quad L^2(\BAN,\mu)
\]
given by \(\left(\mathcal{I}h_\lambda\right)(\theta)=\langle \theta,\lambda\rangle_{\BAN,\BAN^\ast} \), defined for any \(\lambda \in \BAN^*\), 
and this 
extends
to \(\mathcal{I}:\CM\to L^2(\BAN,\mu)\).
Denoting by \(\iota\) the dense embedding \(\BAN^* \to \CM\),
\(\CMn{\iota(\lambda)}= \|\lambda\|_{L^2(\BAN, \mu)}\).
The extension \(\mathcal{I}:\CM\to L^2(\BAN,\mu)\) is 
the \emph{Paley--Wiener map}.

\medskip

\noindent\emph{Step 3.}
\emph{The union of the Paley-Wiener maps of the summands is dense in the Paley-Wiener image of \(\CM\).}
Given 
the orthogonal decomposition
\begin{equation}\label{eq:hilbert_decomposition}
\CM\quad=\quad
\bigoplus_{\ell=1}^\infty \CM^{(\ell)}
 \quad=\quad
\CM^{(1)}\oplus\CM^{(2)}\oplus\ldots\,,
\end{equation}
let 
\(h_1^{(\ell)}\), \(h_2^{(\ell)}\), \ldots, \(h_{k_\ell}^{(\ell)}\) be an orthonormal basis of \(\CM^{(\ell)}\).
By Steps 1 and 2, 
since the Paley-Wiener map is an isometry and each \(\CM^{(\ell)}\) is finite dimensional,
the \(\mathcal{I}h_1^{(\ell)}, \mathcal{I}h_2^{(\ell)}, \ldots, \mathcal{I}h_{k_\ell}^{(\ell)}\)
form an orthonormal basis for
the image \(\mathcal{I}\CM^{(\ell)}\). 
By \eqref{eq:hilbert_decomposition} the following linear span is \(L^2(\BAN,\mu)\)--dense in \(\mathcal{I}\CM\):
\[
\bigcup_{\ell=1}^\infty \left\{
\mathcal{I}h_1^{(\ell)}, \mathcal{I}h_2^{(\ell)}, \ldots, \mathcal{I}h_{k_\ell}^{(\ell)}
\right\}
\,.
\]

\medskip

\noindent\emph{Step 4.} 
\emph{We use the orthogonal decomposition \eqref{eq:hilbert_decomposition} 
to establish a filtration of \(\sigma\)-algebras over \(\BAN\):}
\(\mathcal{F}_1 \leq \mathcal{F}_2\leq \ldots\) is a filtration of \(\sigma\)-algebras 
on \(\BAN\) given by
\[
\mathcal{F}_n\quad=\quad
\sigma\left\{
(\mathcal{I}h_1^{(1)},\mathcal{I}h_2^{(1)},\ldots, \mathcal{I}h_{k_1}^{(1)}) ,
\ldots,
(\mathcal{I}h_1^{(n)},\mathcal{I}h_2^{(n)},\ldots, \mathcal{I}h_{k_n}^{(n)}),
\ldots
\right\}\,.
\]
Set \(\mathcal{F}=\sigma \left\{\mathcal{F}_1,\mathcal{F}_2\ldots\right\}\).
By \emph{Step 3}, all the $\mathcal{I}h_\lambda$ functions 
are measurable with respect to the $\mu$-completion of 
$\mathcal{F}$.
Moreover, by \emph{Step 2}, $\mathcal{I}h_\lambda $ maps $\theta$ to \(\langle \theta,\lambda\rangle_{\BAN,\BAN^\ast}\), 
thus $\mathcal{I}h_\lambda $ is also measurable with respect to \(\Borel_\BAN\).
Furthermore, \emph{Step 1} implies that \(\Borel_\BAN\) is the \emph{smallest} $\sigma$-algebra 
with respect to which all such maps are measurable.
This implies that \(\Borel_\BAN\) is contained in the \(\mu\)-completion of \(\mathcal{F}\).

\medskip

\noindent\emph{Step 5.} 
\emph{Compute the \(\mathcal{F}_n\)-conditional expectation
of a \(\mu\)-random choice of \(\theta\in\BAN\).}
Set
\[
S_n(\theta) \quad=\quad 
\sum_{\ell=1}^n \left ( \sum_{j}  h_j^{(\ell)} \times \left(\mathcal{I}h_j^{(\ell)}\right)(\theta)\right )\,.
\]
On each summand
\(\mu\) induces a Gaussian measure,
so
compute summand by summand to obtain 
\(S_n(\theta) = \mathbb{E}_\mu\left[\theta \mid \mathcal{F}_n\right]\).

\medskip

The proof of the theorem is completed by combining Theorem \ref{thm:Marcinkiewicz} (the Banach version of Marcinkiewicz' Theorem)
with Steps 1--5: 
both $\mu$--almost surely and in $L^2(\BAN, \mu)$,
\[
\lim_n S_n(\theta)\quad =\quad 
\lim_n \mathbb{E}_\mu\left[\theta \mid \mathcal{F}_n\right]
\quad=\quad \mathbb{E}_\mu\left[\theta \mid \mathcal{F}\right]
\quad=\quad \theta\,. 
\]
\end{proof}

\begin{rem}
 The summands \(\CM^{(\ell)}\) 
can also be
infinite-dimensional,
since
infinite-dimensional summands can 
be
expressed
as
orthogonal direct sums of finite-dimensional subspaces.
\end{rem}

\section{Coupling of Brownian motion within finite time}\label{sec:coupling_finite}
Recall that \(\B(0)=0\) and \(\cB(0)=x\) and denote by \(\dist_{\operatorname{TV}}(\nu,\nu')\) the \emph{total variation distance} between the probability measures \(\nu\) and \(\nu'\). 
The \citeauthor{Aldous-1983} inequality implies
\begin{equation}\label{eq:aldous_ineq}
\Prob{\cB \textnormal{ meets }\B \textnormal{ by time }t}\quad\leq\quad 
1-\dist_{\operatorname{TV}}(\cB(t),\B(t)).
\end{equation}
%
The Feldman-Hajek Theorem (Theorem \ref{thm:cameron-martin} above),
thus enforces a fundamental restriction on the possibility of coupling Brownian motions in Banach space:
if \(x\) does not lie in \(\CM\) then the distributions of \(\B(t)\) and \(\cB(t)\) are mutually singular,
and therefore \(\mathbb{P}[~{\cB \textnormal{ meets }\B \textnormal{ by time }t}~]=0\), for all \(t\) and all possible couplings.
On the other hand, 
if \(x\) does lie in \(\CM\) then there is a \emph{maximal} successful coupling:
maximal in the sense that \eqref{eq:aldous_ineq} becomes an equality for all \(t\).

\subsection{Cameron-Martin Reflections in Banach spaces}\label{sec:Cameron-Martin-Reflection}
Given \(x\in\CM\),
construct the \(\CM\)-projection \(h \mapsto \CMd{x}{h} x/\CMn{x}^2\).
Define the resulting \emph{Cameron-Martin reflection} \(R_x(y):\CM\to\CM\) by
\begin{equation}\label{eq:def_reflection}
R_x(y)\quad=\quad 
y- 2\frac{\CMd{x}{y}}{\CMn{x}^2}x  
\qquad \text{ for } y\in\CM\,.
\end{equation}
Note that \(R_x(x)=-x\).
By
Theorem \ref{thm:decomposition}, 
\(R_x\) 
produces
a \emph{reflected} Brownian motion \(\cB=x+R_x(\B)\):
we avoid having to consider the extension of \(R_x\) to all of \(\BAN\) by writing \(\CM=\text{Ker}(\mathbb{I}-R_x)\oplus\text{Ker}(\mathbb{I}+R_x)\), 
then applying Theorem \ref{thm:decomposition} to decompose \(\B=(\B_1,\B_2)\) accordingly
(so \(\B_1\) is a \(\text{Ker}(\mathbb{I}-R_x)\)-valued Brownian motion and \(\B_2\) is a \(\text{Ker}(\mathbb{I}+R_x)\)-valued Brownian motion 
-- in fact \(\B_1\) is essentially a one-dimensional Brownian motion \(\CMd{x}{B}x/\CMn{x}^2\)).
Finally 
set \(\cB=x+(\B_1,-\B_2)=x+\B-2\CMd{x}{B}x/\CMn{x}^2\).

\begin{rem}
\cite{Lindvall-1982a} introduced 
\emph{coupling by reflection} for real
Brownian motion. 
\cite{LindvallRogers-1986} 
adapted 
it to 
couple 
finite-dimensional diffusions.
Further generalizations
include
reflection coupling on Riemannian manifolds
and beyond
\citep{Kendall-1986b,Kendall-1986a,Kendall-1998e,Cranston-1991,VonRenesse-2004}.
\end{rem}

\subsection[Coupling in finite time holds exactly if initial displacement is in Cameron-Martin space]%
{Coupling in finite time holds exactly if initial displacement is in $\CM$}\label{sec:xinH}
We now establish the first and simplest coupling result for Brownian motion in Banach space.
\begin{thm}\label{thm:xinH}
A \(\BAN\)-valued Brownian motion \(\B\) with \(\B(0)=0\) can be coupled with 
another Brownian motion \(\cB\) with \(\cB(0)=x\neq0\) 
if and only if \(x\in \CM\),
where \(\mu=\Law{B(1)}\) is the Gaussian distribution of the Banach-space-valued random variable \(\B(1)\). 
Moreover, coupling can then succeed almost surely and there is even a maximal coupling time
(produced by a Cameron-Martin reflection based on \(x\))
which has the distribution of the 
first time \(T\) at which
\(\CMd{x}{\B(T)}= \tfrac12\CMn{x}^2\).
\end{thm}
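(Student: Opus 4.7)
The strategy divides naturally into necessity of $x\in\CM$, sufficiency via a Cameron-Martin reflection, and finally the identification of maximality.

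For necessity, suppose $x\notin\CM$. Since $\Law{\B(t)}$ is the rescaling of $\mu$ by $\sqrt{t}$, its Cameron-Martin space is again $\CM$ (with the norm scaled by $1/\sqrt{t}$), so $x$ still fails to lie in it. The Feldman-Hajek Theorem~\ref{thm:cameron-martin} therefore forces $\Law{\B(t)}$ and $\Law{\cB(t)}={T_x}\#\Law{\B(t)}$ to be mutually singular; hence $\dist_{\operatorname{TV}}(\B(t),\cB(t))=1$, and Aldous' inequality \eqref{eq:aldous_ineq} kills the meeting probability at every $t$ for every conceivable coupling.

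For sufficiency, assuming $x\in\CM$, I would use the Cameron-Martin reflection construction of Section~\ref{sec:Cameron-Martin-Reflection}: set $\cB(t)=x+R_x(\B(t))$. Via the $\CM$-orthogonal splitting $\CM=\operatorname{Ker}(\mathbb{I}-R_x)\oplus\operatorname{Ker}(\mathbb{I}+R_x)$ and Theorem~\ref{thm:decomposition} applied to the corresponding decomposition $\B=(\B_1,\B_2)$, one sees that $\cB$ is a genuine $\BAN$-valued Brownian motion starting at $x$, and
\begin{equation*}
 \cB(t)-\B(t)\;=\;\Bigl(1-\tfrac{2}{\CMn{x}^2}Z(t)\Bigr)x,\qquad\text{where }Z(t):=\CMd{x}{\B(t)}\,,
\end{equation*}
with $\CMd{x}{\B(t)}$ interpreted through the Paley-Wiener isometry $\mathcal{I}:\CM\to L^2(\BAN,\mu)$ of Section~\ref{sec:decomposition_BMs}. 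The isometric and Gaussian nature of $\mathcal{I}$ then identifies $Z$ as a centred real Brownian motion of variance $\CMn{x}^2$ per unit time.

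Set $T=\inf\{t\geq 0:Z(t)=\tfrac12\CMn{x}^2\}$. Because $Z$ is a non-degenerate real Brownian motion, $T<\infty$ almost surely and $\cB(T)=\B(T)$; extending by $\cB\equiv\B$ on $[T,\infty)$, and using the strong Markov property to check that the Brownian property is preserved, completes a successful coupling. For maximality the reflection principle applied to $Z$ gives
\begin{equation*}
 \Prob{T\leq t}\;=\;2\bigl(1-\Phi\bigl(\CMn{x}/(2\sqrt{t})\bigr)\bigr)\,,
\end{equation*}
while Feldman-Hajek combined with the scaling $\Law{\B(t)}=\sqrt{t}\cdot\mu$ (so that the Cameron-Martin norm of $x$ with respect to $\Law{\B(t)}$ becomes $\CMn{x}/\sqrt{t}$) shows that the join of $\Law{\B(t)}$ and $\Law{\cB(t)}$ has total mass given by exactly the same expression. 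Aldous' bound \eqref{eq:aldous_ineq} is thus attained with equality at every $t$, and the reflection coupling is maximal.

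The delicate step is the Paley-Wiener interpretation of $\CMd{x}{\B(t)}$ when $x\in\CM\setminus\mathring{\CM}$: since $\B(t)$ almost surely lies outside $\CM$, the bracket must be routed through the extension $\mathcal{I}$ of the dual pairing. Verifying that $\mathcal{I}x$ evaluated along the path of $\B$ is a real Brownian motion with the stated covariance structure, and that the post-$T$ extension $\cB\equiv\B$ preserves the joint Brownian property on $\BAN$, is where the machinery of Section~\ref{sec:decomposition_BMs} really earns its keep.
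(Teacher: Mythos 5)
Your proposal is correct and follows essentially the same route as the paper: Aldous' inequality plus Feldman--Hajek for necessity, the Cameron--Martin reflection $\cB=x+R_x(\B)$ (justified via Theorem~\ref{thm:decomposition}) for sufficiency, and the reflection principle matched against the Feldman--Hajek total-mass formula to establish maximality. Your additional care over the Paley--Wiener interpretation of $\CMd{x}{\B(t)}$ and the explicit scaling of the Cameron--Martin norm under $\Law{\B(t)}$ only makes explicit what the paper leaves implicit.
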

\begin{proof}
One direction follows from the \citeauthor{Aldous-1983} inequality \eqref{eq:aldous_ineq} and Theorem \ref{thm:cameron-martin}
as noted at the start of Section \ref{sec:coupling_finite} above: if \(x\notin \CM\), then 
\[
\Prob{\B \textnormal{ meets }\cB\textnormal{ within finite time}}\quad=\quad0\,.
\]

On the other hand, consider \(x\in \CM\).
Given the Brownian motion \(\B\), 
construct \(\cB\) from \(\B\) using the reflection \(R_x\) as described in Section \ref{sec:Cameron-Martin-Reflection}.
Thus, until \(\cB\) and \(\B\) coincide,
let
\begin{equation}\label{eq:coupling_H}
\cB(t)\quad=\quad R_x(\B(t))+x
\quad=\quad
x + \B(t)-2\frac{\CMd{x}{\B(t)}}{\CMn{x}^2}x\,.
\end{equation}
Once \(\cB=\B\), the two Brownian motions 
evolve as a single process.
By Equation \eqref{eq:coupling_H}
\[
\A(t)\quad=\quad\cB(t)-\B(t)
\quad=\quad
\begin{cases}
x-2\frac{\CMd{x}{\B(t)}}{\CMn{x}^2}x & \text{ while }\cB(t)\neq\B(t)\,,\\
0 & \text{ once }\cB(t)=\B(t)\,,
\end{cases}
\]
is an element of the Cameron-Martin space \(\CM\), since it is a scalar multiple of \(x\in \CM\).
Hence,
\begin{multline*}
\Prob{\cB \textnormal{ meets }\B\textnormal{ by time }T}
\quad=\quad
\\
\Prob{\A(s)=0 \textnormal{ for some }s\leq T}
\quad=\quad
\Prob{\CMn{\A(s)} =0, \textnormal{ for some }s\leq T}\,.
\end{multline*}
Now \(\A\) is a difference of a Banach-valued Brownian motion and its reflection, stopped once they agree.
Therefore its absolute value 
is
a scalar Brownian motion of rate
\(4\), 
begun at \(\CMn{x}\),
stopped on reaching \(0\):%
\[
\CMn{\A(t)} 
\quad=\quad
\begin{cases}
\left(1-2\frac{\CMd{x}{\B(t)}}{\CMn{x}^2}\right)\CMn{x} & \text{ while }\cB(t)\neq\B(t)\,,\\
 0 & \text{ once } \cB(t)=\B(t)\,.
\end{cases}
\]
By the Reflection Principle,
the probability of coupling by time \(t\) is 
exactly the total mass of the join of probability distributions as given in 
Theorem \ref{thm:cameron-martin}.
Hence
this 
is indeed a maximal coupling.
\end{proof}

\begin{rem}
As
\(\CM\) is dense in \(\BAN\), we can produce \(\eps\)-approximate couplings of Brownian motions 
 begun at any two different starting points in \(\BAN\)
 (simply \(\BAN\)-approximate \(x\in\BAN\) by \(h\in\CM\)).
 Such \(\eps\)-approximate couplings are reminiscent of the Wasserstein variants of CFTP algorithms 
 introduced by \citet{Gibbs-2004} for image restoration,
 and 
 may be of use in future applications.
\end{rem}

\section[Coupling at time infinity and Schauder basis properties]%
{Coupling at time $\infty$ and Schauder basis properties}\label{sec:coupling_schauder}
The work of Section \ref{sec:coupling_finite} 
suggests the following natural question.
When will there exist a coupling between \(\B\) and \(\cB\)
that allows the two Brownian motions almost surely to ``\emph{couple at time \(\infty\)}'', whatever the starting 
points \(\B(0),\cB(0)\in\BAN\)?
In other words, when can
\[
\Prob{\lim_{t\to \infty} \BANn{\cB(t) -\B(t)} =0}\quad=\quad1\,.
\]
This question 
links
to the concept of \emph{Schauder bases} for Banach spaces.

\subsection{Schauder basis}
A Schauder basis for \(\BAN\) is a sequence \((e_k:k=1,2,\ldots)\) drawn from a Banach space \(\BAN\), 
such that each \(x\in \BAN\) admits a \emph{unique} decomposition as
the conditionally convergent sum
\(x=\sum_{k=1}^\infty \alpha_k e_k\)
for some coefficients \(\alpha_k\in \Reals\) depending on \(x\).
In fact \citet[pages 110-112]{Banach-1932} observed that \(\alpha_k\) depends continuously (and linearly) on \(x\) (see also \citealp[page 878]{McArthur-1972}).
Thus \(\alpha_k=\BANd{e_k^*}{x}\), where each \(e_k^*\in\BAN^*\) depends on the entire Schauder basis, with
the following holding as a conditionally \(\BAN\)-convergent sum
\begin{equation}\label{eq:biorthogonal}
 x\quad=\quad
\sum_{k=1}^\infty \BANd{e_k^*}{x}\; e_k\,.
\end{equation}
\begin{rem}
The convergence in Equation \eqref{eq:biorthogonal} is \emph{conditional} 
(it depends on the order of the sequence of basis vectors \(e_n\)) 
and must be interpreted using the \(\BAN\)-norm topology:
\[
\lim_{K\to \infty}\BANn{ w-\sum_{k=0}^K \BANd{e_k^*}{x}\; e_k}=0\,.
\]
\end{rem}
A Banach space with a Schauder basis is necessarily separable, but the converse is not true 
(see the celebrated counterexample of \citealp{Enflo-1973}).
Separable Banach spaces can admit other types of bases such as the \emph{Markushevich basis} 
(typically referred to as \emph{M-basis}), the \emph{Auerbach basis}, or simply a \emph{finite-dimensional decomposition}
(\citealp[Chapter 1]{HajekSantaluciaVanderwerffZizler-2008}, \citealp{Casazza-2001}). 
Markushevich bases will make an appearance in the concluding section \ref{sec:conc}, but we will not discuss the other notions here.

\subsection[Brownian motions coupled at time infty]{Brownian motions coupled at time $\infty$}
In the following, \(\B\) and \(\cB\)
denote coupled $\BAN$-valued Brownian motions  started at \(0\) and \(x\in \BAN\) respectively.
If \(x\in \BAN\setminus \CM\) then reflection coupling is not 
well defined!

\begin{thm}\label{thm:schauder_WnotH}
Let \(\BAN\) be a separable Banach space possessing a Schauder basis \((e_k: k=1,2,\ldots)\)
which also forms an orthonormal basis for \(\CM\). 
Then it is possible to construct a coupling at time \(\infty\) of Brownian motions \(\B\) and \(\cB\) started at any two given starting points,
so
\[
\Prob{\BANn{\B(t) -\cB(t)} \quad\to\quad 0\textnormal{ as }t\to\infty}\quad=\quad1\,.
\]
\end{thm}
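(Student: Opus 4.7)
The plan is to exploit the Schauder-basis expansion of $x$ by running Cameron-Martin reflection couplings in \emph{phases}, each phase extinguishing one finite block of basis coordinates while introducing only a controlled within-phase excursion. By Theorem \ref{thm:decomposition} applied to the orthogonal decomposition $\CM=\bigoplus_{k\geq1}\R e_k$, write $\B(t)=\sum_{k\geq1}\xi_k(t)\,e_k$ with independent standard real Brownian motions $\xi_k$; the Schauder hypothesis also expands $x=\sum_{k\geq1}\alpha_k e_k$ as a $\BAN$-convergent sum. I would then choose integers $0=K_0<K_1<K_2<\ldots$ rapidly enough that $\BANn{x-\sum_{k\leq K_n}\alpha_k e_k}\leq 2^{-n}$, and set $x^{(n)}=\sum_{K_{n-1}<k\leq K_n}\alpha_k e_k\in\CM$; a triangle inequality then gives $\BANn{x^{(n)}}\leq 3\cdot 2^{-n}$.

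Next I would construct $\cB$ inductively on consecutive random intervals $[\tau_{n-1},\tau_n]$, with $\tau_0=0$. During phase $n$ the basis coordinates indexed by $k\leq K_{n-1}$ are already coupled from earlier phases; coordinates $k>K_n$ are left evolving synchronously (contributing a frozen tail $\sum_{k>K_n}\alpha_k e_k$ of $\BAN$-norm $\leq 2^{-n}$); and on the finite-dimensional subspace $V_n=\R e_{K_{n-1}+1}\oplus\cdots\oplus\R e_{K_n}\subset\CM$ I would apply the Cameron-Martin reflection of Section \ref{sec:Cameron-Martin-Reflection} in the direction $x^{(n)}$, acting on the $V_n$-valued Brownian increment beginning at $\tau_{n-1}$. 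Theorem \ref{thm:xinH} applied within $V_n$ (with initial displacement $x^{(n)}\in V_n\subset\CM$) guarantees that $\tau_n<\infty$ almost surely and that at $\tau_n$ all basis coordinates $k\leq K_n$ of $\B$ and $\cB$ agree.

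The decisive estimate is the within-phase supremum bound. On phase $n$ the displacement has the form
\[
\cB(t)-\B(t)\quad=\quad c_n(t)\,x^{(n)}+\sum_{k>K_n}\alpha_k e_k\,,
\]
where $c_n$ is a real process starting at $1$ and evolving as a time-changed Brownian motion killed on reaching $0$. A gambler's-ruin computation gives $\Prob{M_n>\lambda}=1/\lambda$ for $\lambda\geq 1$, where $M_n:=\sup_{t\in[\tau_{n-1},\tau_n]}|c_n(t)|$. Consequently $\Prob{M_n\BANn{x^{(n)}}>2^{-n/2}}\leq 2^{n/2}\BANn{x^{(n)}}\leq 3\cdot 2^{-n/2}$ for all large $n$; this is summable, so the first Borel-Cantelli lemma yields $M_n\BANn{x^{(n)}}\to 0$ almost surely. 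Combined with the $2^{-n}$ tail bound this forces $\sup_{t\in[\tau_{n-1},\tau_n]}\BANn{\cB(t)-\B(t)}\to 0$ almost surely as $n\to\infty$, whence $\BANn{\cB(t)-\B(t)}\to 0$ as $t\to\infty$ almost surely (noting that $\tau_n\to\infty$ almost surely, or else coupling already happens in finite time).

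The main obstacle is the within-phase control: reflection-coupled one-dimensional Brownian excursions have Pareto-type $1/\lambda$ tails for their running supremum, so naive coupling leaves within-phase errors uncontrolled. The Schauder-basis hypothesis is invoked precisely to render the chunk norms $\BANn{x^{(n)}}$ summably small in $\BAN$, enabling the Borel-Cantelli closure; without this (or a comparable substitute giving $\BAN$-controlled finite-dimensional projections converging to the identity), the within-phase reflection excursion cannot be dominated.
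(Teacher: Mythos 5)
Your proposal is correct and rests on the same three ingredients as the paper's own proof: grouping the Schauder expansion of the initial displacement into \(\CM\)-orthogonal blocks whose \(\BAN\)-norms are summably small, reflection-coupling each block via Theorem \ref{thm:xinH} (through the decomposition of Theorem \ref{thm:decomposition}), and taming the reflected excursions by combining the \(1/\lambda\) tail of the running maximum of a reflected scalar Brownian motion with the first Borel--Cantelli lemma. The one genuine structural difference is scheduling: the paper runs \emph{all} the block reflections simultaneously from time \(0\), letting block \(n\) couple at its own random time \(T_n\) and then bounding \(\sup_{t>T_1\vee\dots\vee T_{N-1}}\BANn{\cB(t)-\B(t)}\) by the sum over \(n\geq N\) of the per-block suprema; you instead run the blocks \emph{sequentially} on consecutive random intervals \([\tau_{n-1},\tau_n]\), keeping the not-yet-treated coordinates synchronous. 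Your sequential version has the small advantage that at any given time only one block is ``active'', so the displacement is exactly \(c_n(t)x^{(n)}\) plus a deterministic frozen tail and no infinite sum of simultaneous excursions needs to be controlled; the price is that you must justify that concatenating reflections across the stopping times \(\tau_n\) still produces a \(\BAN\)-valued Brownian motion (strong Markov property plus the orthogonal-invariance supplied by Theorem \ref{thm:decomposition}), and you must dispose of the possibility \(\sup_n\tau_n<\infty\), which you correctly note reduces to coupling in finite time. Both routes are sound and of essentially equal length; the estimates (\(\BANn{x^{(n)}}\leq 3\cdot 2^{-n}\), the gambler's-ruin tail, and the Borel--Cantelli closure) match the paper's \eqref{eq:norm-bound}--\eqref{eq:total-BM-deviation} almost line for line.
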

 
\begin{rem}
Theorem \ref{thm:schauder_WnotH} holds for all possible \(x\in\BAN\), though
construction details 
depend on 
the Schauder expansion \eqref{eq:biorthogonal} of \(x\).
If \(x\not\in\CM\) then the coupling is 
actually maximal, albeit only in a 
degenerate sense, since
the distributions of \(\B(t)\) and \(\cB(t)\) in \eqref{eq:aldous_ineq} 
are
then mutually singular for all time \(t>0\).
\end{rem}

\begin{proof}[Proof of Theorem \ref{thm:schauder_WnotH}]
Suppose that \(x\in\BAN\) is the initial displacement of \(\cB\) relative to \(\B\).
Because the Schauder basis 
is 
orthogonal 
in \(\CM\), 
there exist \(\CM\)-orthogonal
\(x_1\), \(x_2\), \ldots
with
\[
 \BANn{x - (x_1+\ldots+x_n)} \quad\leq\quad 2^{-n-1} \qquad\text{ for } n\geq1\,.
\]
It suffices to take \(x_n = \sum_{k=r_{n-1}}^{r_n-1} \BANd{e_k^*}{x} e_k\in\CM\) for a sufficiently rapidly 
increasing sequence \(r_0=1 < r_1 < r_2 < \ldots\).
By the triangle inequality, for \(n\geq1\),
\begin{multline}\label{eq:norm-bound}
 \BANn{x_{n+1}} \quad\leq\quad \BANn{x - (x_1+\ldots+x_n)} + \BANn{(x_1+\ldots+x_n+x_{n+1})-x}\\
 \quad\leq\quad 2^{-n-1} + 2^{-n} \quad < \quad 2^{-n+1}\,.
\end{multline}
The \(\CM\)-orthogonal finite-dimensional projections
\(\Pi_n:z \mapsto \sum_{k=r_{n-1}}^{r_n-1} \CMd{e_k^*}{z} e_k\)
decompose 
\(\B\) and \(\cB\) 
following Theorem \ref{thm:decomposition}:
\begin{align*}
 \B(t) \quad&=\quad \Pi_1\B(t) + \Pi_2\B(t) + \ldots\,,\\
 \cB(t) \quad&=\quad \Pi_1\cB(t) + \Pi_2\cB(t) + \ldots \,.
\end{align*}
Thus \(\B_n\) is Brownian motion on the finite-dimensional subspace spanned by \(e_{r_{n-1}}, \ldots, e_{r_n-1}\).
Up to the time of coupling of \(\B_n\) and \(\cB_n\), construct \(\cB_n=R_{x_n}\B_n+x_n\) to be the reflection of \(B_n\)
started at \(x_n\) using the reflection map \(R_{x_n}\).

Let \(T_n\) be the time of coupling of \(\B_n\) and \(\cB_n\); Theorem \ref{thm:xinH} 
(really the elementary theory of finite-dimensional reflection coupling)
implies
that 
\(T_n\) 
is distributed as
the first time for scalar Brownian motion to 
hit \(0\)
when run at rate \(1\)
and started at \(\frac12\CMn{x_n}\).

Because real Brownian motion is a continuous martingale it follows that
\begin{equation}\label{eq:BM-deviation}
 \Prob{\sup\left\{\BANn{\cB_n(t)-\B_n(t)}\; :\; 0\leq t \leq T_n\right\} \geq \frac12 n (n+1) \CMn{x_n}}
=
 \frac{1}{n(n+1)}\,.
\end{equation}
By subadditivity of probability and the bound \eqref{eq:norm-bound},
\begin{multline}\label{eq:total-BM-deviation}
 \Prob{\sup\left\{\BANn{\cB_n(t)-\B_n(t)}\; :\; 0\leq t \leq T_n\right\} \geq n (n+1) 2^{-n-1} \text{ for some }n\geq N}\\
 \quad\leq\quad
 \sum_{n=N}^\infty\frac{1}{n (n+1)}\quad=\quad \frac{1}{N}\,.
\end{multline}
For times \(t > T_1\vee\ldots\vee T_{N-1}\), 
coupling means that \(\cB_1(t)=\B_1(t)\), \ldots, \(\cB_{N-1}(t)=\B_{N-1}(t)\).
Using the triangle inequality, \eqref{eq:total-BM-deviation} implies that, with probability at least \(1-1/N\),
\begin{multline}
 \sup\left\{\BANn{\cB(t)-\B(t)}\;:\;t>T_1\vee\ldots\vee T_{N-1} \right\} \quad\leq\quad \\
 \sum_{n=N}^\infty\sup\left\{\BANn{\cB_n(t)-\B_n(t)}\;:\;t>T_1\vee\ldots\vee T_{N-1}\right\}
 \quad\leq\quad 
 \sum_{n=N}^\infty n(n+1) 2^{-n-1}\,.
\end{multline}
Consequently, using the first Borel-Cantelli lemma,
\(\BANnt{\cB(t)-\B(t)} \to 0\) almost surely as \(t\to\infty\).
\end{proof}

As a corollary, the case when \(\BAN\) is a Hilbert space is now completely settled.
\begin{cor}\label{cor:Hilbert}
 If \(\BAN\) is 
 a separable Hilbert space,
 then any \(\BAN\)-valued Brownian motion corresponding to a Cameron-Martin space \(\CM\hookrightarrow\BAN\) can be coupled at time \(\infty\) from any two starting points in \(\BAN\).
\end{cor}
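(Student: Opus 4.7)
The plan is to reduce Corollary \ref{cor:Hilbert} to Theorem \ref{thm:schauder_WnotH} by exhibiting a single sequence $(e_n)$ in $\BAN$ that is simultaneously a Schauder basis of $\BAN$ and an orthonormal basis of $\CM$. The essential tension is that these two roles are played with respect to different inner products, so a generic orthonormal basis of the ambient Hilbert space $\BAN$ will typically not be $\CM$-orthonormal; it is the spectral theorem for the covariance operator that reconciles the two structures.

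First I would use that $\BAN$ is a separable Hilbert space to identify $\BAN^\ast$ with $\BAN$ via the Riesz map; under this identification, $\widehat{\COV}_\mu$ becomes a bounded, self-adjoint, positive operator $Q:\BAN\to\BAN$. Standard Gaussian theory on Hilbert spaces (a consequence of Fernique's Theorem) ensures that $Q$ is trace-class, hence compact; the non-degeneracy hypothesis gives $\ker Q=\{0\}$ and strictly positive eigenvalues. Applying the spectral theorem for compact self-adjoint operators supplies an orthonormal basis $(f_n)$ of $\BAN$ with $Q f_n=\lambda_n f_n$ and $\lambda_n>0$.

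Set $e_n := \sqrt{\lambda_n}\,f_n = Q^{1/2} f_n$. Recalling the isometric identification $\CM = Q^{1/2}(\BAN)$ with $\CMd{Q^{1/2} u}{Q^{1/2} v}=\langle u,v\rangle_\BAN$, one obtains $\CMd{e_m}{e_n}=\delta_{mn}$, while density of the span in $\CM$ follows from the spectral resolution of $Q$; so $(e_n)$ is an orthonormal basis of $\CM$. For the Schauder property in $\BAN$, each $x\in\BAN$ has the $\BAN$-convergent expansion $x=\sum_n \langle x,f_n\rangle_\BAN f_n = \sum_n \lambda_n^{-1/2}\langle x,f_n\rangle_\BAN\, e_n$, with coefficient uniqueness inherited directly from uniqueness of the Hilbert expansion in $(f_n)$.

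With $(e_n)$ serving both roles, the corollary is an immediate invocation of Theorem \ref{thm:schauder_WnotH}. The principal obstacle is conceptual rather than computational: one must resist the temptation to use an arbitrary $\BAN$-orthonormal basis, and instead diagonalise $Q$ to identify the unique scaling under which the same vectors become $\CM$-orthonormal while retaining their Schauder role in the ambient Hilbert space.
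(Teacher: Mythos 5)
Your proof is correct and takes essentially the same route as the paper: both arguments diagonalise the trace-class, injective, positive covariance operator (you work with \(Q=AA^*\) on \(\BAN\), the paper with \(A^*A\) on \(\CM\) --- an equivalent choice) to produce one sequence that is simultaneously \(\CM\)-orthonormal and \(\BAN\)-orthogonal, hence a Schauder basis, and then invoke Theorem \ref{thm:schauder_WnotH}.
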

\begin{proof}
Recall the continuous injection \(A:\CM\hookrightarrow\BAN\).
By Prokorov's characterization of Gaussian measures on the Hilbert space \(\BAN\) (see \citealp[Theorem 2.3 and following remark]{Kuo-1975}),
\(A^*A\) is positive definite (n.b.~\(A\) is injective) and trace-class.
Its spectral decomposition therefore can be expressed in terms of finite-dimensional spaces,
and for any non-zero eigenvalue $\lambda$, the renormalized operator \(A/\lambda\) restricted to \(\text{Ker}(A^*A-\lambda^2)\) is an isometry.
Let \(v_1\), \(v_2\), {\ldots} be an orthonormal basis of \(\CM\) using eigenvectors of \(A^*A\):
by an eigenvalue argument, \(Av_1\), \(Av_2\), {\ldots} are \(\BAN\)-orthogonal.
By injectivity of \(A\) and its dense image, \(Av_1\), \(Av_2\), {\ldots} form a
\(\BAN\)-orthogonal basis, hence a Schauder basis.
\end{proof}

For more general Banach spaces, the methods of Theorem \ref{thm:schauder_WnotH} immediately supply an approach which works
at least for some initial displacements \(x\in\BAN\setminus\CM\).
\begin{cor}\label{cor:decomposition-of-initial-displacements}
 Given two initial starting points \(x\neq y\) in \(\BAN\),
 suppose it is possible to find orthogonal \(z_1\), \(z_2\) ,\ldots in \(\CM\) such that \(x-y=z_1+z_2+\ldots\), 
 with the sum converging conditionally in \(\BAN\).
 Then we can construct a coupled pair of Banach-valued Brownian motions \(\B\) and \(\cB\)
 starting from \(x\) and \(y\) which almost surely couple at time \(\infty\).
\end{cor}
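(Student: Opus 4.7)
My plan is to reduce this to the machinery already built in the proof of Theorem \ref{thm:schauder_WnotH}: the given orthogonal decomposition of $x-y$ will play the role there played by the Schauder expansion. By translation invariance I may assume $y = 0$, so $x = z_1 + z_2 + \ldots$ with the $z_i$ mutually $\CM$-orthogonal and the sum converging in the $\BAN$-norm. First I would group the $z_n$ into finite blocks, in order to impose the geometric $\BAN$-norm control that drove the Borel-Cantelli argument in Theorem \ref{thm:schauder_WnotH}. Since the sum converges conditionally in $\BAN$, I can choose an increasing sequence of indices $r_0 = 1 < r_1 < r_2 < \ldots$ such that $\BANn{x - \sum_{k=1}^{r_n-1}z_k} \leq 2^{-n-1}$, and then set $w_n = \sum_{k=r_{n-1}}^{r_n-1} z_k$. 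The $w_n$ remain pairwise $\CM$-orthogonal (being sums over disjoint index blocks drawn from a $\CM$-orthogonal family), and the bound \eqref{eq:norm-bound} is reproduced verbatim with $w_n$ in place of $x_n$.

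Next I would use the orthogonal decomposition $\CM = V_0 \oplus V_1 \oplus V_2 \oplus \ldots$, where for $n \geq 1$ the summand $V_n$ is the one-dimensional subspace spanned by $w_n$, and $V_0$ is the $\CM$-orthogonal complement of $V_1 \oplus V_2 \oplus \ldots$ in $\CM$. Theorem \ref{thm:decomposition} then splits $\B$ into independent summands $\B_0, \B_1, \B_2, \ldots$ living on the $\BAN$-closures of the $V_n$. Since the initial displacement has no component in $V_0$, I take $\cB_0 = \B_0$; for each $n \geq 1$ I apply Cameron-Martin reflection (Section \ref{sec:Cameron-Martin-Reflection}) to set $\cB_n = R_{w_n}\B_n + w_n$ up to the coupling time $T_n$, and let $\cB_n$ agree with $\B_n$ thereafter. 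Summing these summands (again via Theorem \ref{thm:decomposition}) yields a $\BAN$-Brownian motion $\cB$ started at $\sum_n w_n = x$.

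The Borel-Cantelli argument from the proof of Theorem \ref{thm:schauder_WnotH} — the reflection-principle estimate \eqref{eq:BM-deviation} combined with subadditivity of probability and the geometric decay of $\BANnt{w_n}$ — then transfers verbatim, yielding $\BANnt{\cB(t) - \B(t)} \to 0$ almost surely as $t \to \infty$. I do not anticipate any serious obstacle here: the only points requiring explicit checking are that block-grouping preserves $\CM$-orthogonality (immediate from the disjoint-index construction) and that the $\BAN$-norm bound of \eqref{eq:norm-bound} holds for the $w_n$ (secured by the choice of the $r_n$ using the hypothesis of conditional $\BAN$-convergence). Once those are in hand the argument is essentially a rerun of the proof of Theorem \ref{thm:schauder_WnotH}, with the Schauder-basis expansion replaced by the hypothesized $\CM$-orthogonal decomposition of $x-y$.
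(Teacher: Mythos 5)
Your proposal is correct and follows essentially the same route as the paper, which gives no separate proof of this corollary but simply notes that the methods of Theorem \ref{thm:schauder_WnotH} apply with the hypothesized $\CM$-orthogonal decomposition of $x-y$ replacing the Schauder expansion. Your block-grouping of the $z_k$ via conditional convergence, the use of Theorem \ref{thm:decomposition} with a synchronous coupling on the orthogonal complement $V_0$, and the rerun of the reflection-plus-Borel--Cantelli argument are exactly the intended adaptation.
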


\begin{rem}
\citet[Corollary 4.2, page 66]{Kuo-1975} observes that one can always construct Banach spaces \(\BAN_0\) 
with
 \(\CM{\hookrightarrow}\BAN_0\hookrightarrow\BAN\)
 and \(\BAN_0\) strictly containing \(\CM\),
 such that there is an orthonormal basis of \(\CM\) which forms a Schauder basis for \(\BAN_0\).
\end{rem}

\section{Conclusion}\label{sec:conc}
In conclusion we discuss a simple open question (Subsection \ref{sec:open}),
and indicate some further possible lines of research (Subsection \ref{sec:further-work}).

\subsection{An open question}\label{sec:open}
This paper has shown that Brownian couplings at time \(\infty\) are always possible if \(\BAN\) supports a Schauder basis
which is also \(\CM\)-orthogonal (Theorem \ref{thm:schauder_WnotH}),
and in particular that they are always possible in the important special case when \(\BAN\) is a Hilbert space
(Lemma \ref{cor:Hilbert}).
It is therefore natural to ask
\begin{qn}\label{qn:open}
Given an abstract Wiener space \(\BAN^*\hookrightarrow\CM{\hookrightarrow}\BAN\),
is it always possible to produce a coupling at time \(\infty\) for two 
\(\BAN\)-valued Brownian motions started from arbitrary starting points in \(\BAN\)?
(Or if not, then what Banach-space geometry property for \(\BAN^*\hookrightarrow\CM{\hookrightarrow}\BAN\) corresponds to the  
probabilistic property that Brownian coupling at time \(\infty\) is always possible?) 
\end{qn}

Certainly there are abstract Wiener spaces possessing the property of Brownian coupling at time \(\infty\)
which do not possess a \(\CM\)-orthogonal Schauder basis. 
Recall that \(\BAN\) has the finite dimensional decomposition property (FDD) if it supports finite-dimensional subspaces \(\BAN^{(n)}\) such that
\(x=\sum_{n=1}^\infty x_n\) (conditionally convergent) for unique \(x_n\in\BAN^{(n)}\): to be compatible with
the Cameron-Martin geometry, we must further require that the \(\BAN^{(n)}\) form an orthogonal decomposition of \(\CM\).
Note there exist separable Banach spaces \(\BAN\)
with FDD but without a Schauder basis \citep{Casazza-2001}.
The proof of Theorem \ref{thm:schauder_WnotH} shows that Brownian couplings at time \(\infty\) are always possible 
when \(\BAN\) has an FDD compatible with the Cameron-Martin geometry.

Adaptation of work of \cite{Terenzi-1994} 
permits a modest advance on the results described above.
We 
outline the argument 
briefly.

Recall that a Markushevich basis is a particular kind of biorthogonal system
(a sequence of pairs $(x_n,x_n^\ast)_{n}\in \BAN\times \BAN^\ast$ is said to be a \emph{bi-orthogonal system} whenever
\(x_n^\ast(x_m)=\delta_{n,m}\)):
\begin{defn}[Markushevich basis]
A bi-orthogonal system that is 
\begin{enumerate}
 \item \emph{fundamental}  (\(\overline{\text{span}}^{\|\cdot\|_{\BAN}}\{x_n\}_{n\in \N}=\BAN\))
 \item and \emph{total}    (\(\overline{\text{span}}^{w^\ast}\{x_n^\ast\}_{n\in \N}=\BAN^\ast\))
\end{enumerate}
is 
a \emph{Markushevich basis} (M-basis).
(Here $\overline{\text{span}}^{w^\ast}$ 
is
closure of 
linear span under 
weak$^\ast$ topology.)
\end{defn}

The modest advance is as follows:
Brownian coupling at time \(\infty\) 
is possible
if
\(\BAN\) supports a \emph{norming} M-basis $(x_n,x_n^\ast)_{n}\in \BAN\times \BAN^\ast$, \(n=1,2,\ldots\),
with
\(\CM\)-orthogonal \(x_n\): ``norming'' means that
\[
\vertiii{x}
\quad=\quad 
\sup \{\langle x,x^\ast\rangle \ : \ x^\ast \in 
(\overline{\operatorname{span}}^{\BANn{\cdot}}\{x_n^\ast\}_n)\cap B_{(\BAN^\ast, \|\cdot\|_{\BAN^\ast} )}\}
\]
defines a norm on \(\BAN\) such that \(\lambda \BANn{x}\leq \vertiii{x}\leq \BANn{x}\) for some $0<\lambda\leq 1$.
Here $ B_{(\BAN^\ast, \|\cdot\|_{\BAN^\ast} )}$  denotes the unit ball in the Banach space $\BAN^\ast$.
Note that the argument used in the proof of Theorem \ref{thm:schauder_WnotH} 
will apply if any \(x\in\BAN\) can be represented 
as the limit of partial sums of orthogonal elements of \(\CM\), so this is the objective.
\cite{Terenzi-1994} establishes this representation in terms of existence of an M-basis for \emph{any} separable Banach space,
but without imposing any requirement of \(\CM\)-orthogonality.
We now describe the steps required to adapt \cite{Terenzi-1994} to show that the argument
can be sufficiently
related to the underlying Cameron-Martin geometry to capture enough \(\CM\)-orthogonality
to obtain the required representation.

\emph{Step 1:}
Assume 
existence of
a \emph{norming} M-basis whose elements are orthogonal in \(\CM\).

\emph{Step 2:} Further adjust the norming M-basis to be \emph{bounded}
(so that
there is a constant $0<M<\infty$ such that $\sup_n \{\BANn{x_n}\cdot \|x_n^\ast\|_{\BAN^\ast}\}\leq M$).
This uses 
the approach of
\cite{OvsepianPelczynski-1975}: 
the M-basis is adjusted using Haar unitary matrix transformations on disjoint finite-dimensional subspaces,
hence retaining \(\CM\)-orthogonality
while preserving the disjoint subspace decomposition.

\emph{Step 3:} Now note \citet[Lemma 1.40]{HajekSantaluciaVanderwerffZizler-2008}:
\begin{lem}
If $\{x_n,x_n^*\}_n$ 
is a 
bounded norming M-basis
for $\BAN$
then 
there 
are
integers $r(1)<r(2)<...$ such that for any $x \in \BAN$ and every $m \in \N$ 
there is an element $v_m$ in span$\{x_n\}_{n=r(m)+1}^{r(m+1)} $ 
with
\[
x\quad=\quad
\lim_{m\to \infty} \Big[\sum_{n=1}^{r(m)} \langle x,x_n^*\rangle x_n + v_m\Big]\,.
\]
\end{lem}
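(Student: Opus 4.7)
The plan is a two-step construction: first choose the partition integers \(r(m)\) via a diagonal procedure on a countable dense set, then extend the approximation from that dense set to an arbitrary \(x \in \BAN\) using the norming property. Write \(E_m = \operatorname{span}\{x_n\}_{n=r(m)+1}^{r(m+1)}\) and \(P_m(x) = \sum_{n=1}^{r(m)}\langle x, x_n^\ast\rangle x_n\). By biorthogonality \(\langle x - P_m(x), x_k^\ast\rangle = 0\) for \(k \leq r(m)\), so the entire claim reduces to showing that the ``residual'' \(z_m := x - P_m(x)\) can be \(\BAN\)-approximated by elements of \(E_m\) as \(m\to\infty\), with a \emph{single} sequence \(r(m)\) that works for every \(x\).

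First I would fix a countable dense subset \(\{y_j\}_{j\geq 1}\) of the unit ball of \(\BAN\). Using only fundamentality of \(\{x_n\}\), for each \(m\) and each \(j \leq m\) there exist finite linear combinations in \(\operatorname{span}\{x_n : n > r(m)\}\) approximating \(y_j - P_m(y_j)\) to within \(2^{-m}\); by taking \(r(m+1)\) sufficiently large, all such combinations can be arranged to lie in \(E_m\) simultaneously for \(j\leq m\). This inductively produces \(r(1) < r(2) < \ldots\) for which the lemma holds on the dense set \(\{y_j\}\).

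The main obstacle is transferring the approximation from \(\{y_j\}\) to arbitrary \(x\) with the same \(r(m)\). This is where the norming and boundedness hypotheses enter crucially. Given \(x\), estimate
\[
\BANn{z_m - v_m} \;\leq\; \lambda^{-1}\vertiii{z_m - v_m} \;=\; \lambda^{-1} \sup\bigl\{\langle z_m - v_m, x^\ast\rangle : x^\ast \in \overline{\operatorname{span}}^{\|\cdot\|_{\BAN^\ast}}\{x_n^\ast\}_n,\; \|x^\ast\|_{\BAN^\ast}\leq 1\bigr\}.
\]
For any such test functional \(x^\ast\), approximate in norm by \(\sum_{n \leq R} a_n x_n^\ast\) and split the sum at \(n = r(m)\). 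Biorthogonality annihilates the low part against \(z_m\), while \(v_m\in E_m\) is chosen as a best finite-dimensional approximation to \(z_m\) under \(\vertiii{\cdot}\); existence follows from finite-dimensionality of \(E_m\), and the norming constant \(\lambda\) together with the M-basis bound \(\sup_n \BANn{x_n}\|x_n^\ast\|_{\BAN^\ast}\leq M\) ensures that bounds obtained in \(\vertiii{\cdot}\) convert back to bounds in \(\BANnt{\cdot}\) without a loss that grows in \(m\).

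The hard part is precisely this uniform extension: a bounded norming M-basis does \emph{not} provide uniformly bounded biorthogonal projections \(P_m\), so a naive approach that picks \(y_j\) close to \(x\) and invokes the dense-set approximation cannot be pushed through directly, because \(P_m(x-y_j)\) may be large. The norming property circumvents this by letting errors be measured dually against a fixed test family drawn from \(\overline{\operatorname{span}}\{x_n^\ast\}\), where the biorthogonality of \(z_m\) with low-index functionals makes the dangerous contributions cancel automatically; boundedness of the M-basis then gives the quantitative control needed to close the estimate uniformly in \(x\).
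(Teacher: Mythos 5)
The paper does not actually prove this statement: it is quoted verbatim as Lemma 1.40 of H\'ajek--Montesinos Santaluc\'ia--Vanderwerff--Zizler (with a remark citing Fonf that the conclusion is in fact \emph{equivalent} to the norming property), so there is no internal proof to compare against. Judged on its own terms, your argument has a genuine gap at precisely the point you yourself label ``the hard part.'' Your first stage is essentially fine, modulo a misattribution: approximating \(y_j - P_{r(m)}(y_j)\) by elements of \(\operatorname{span}\{x_n : n > r(m)\}\) is not a consequence of ``only fundamentality'' --- it uses biorthogonality together with the finite-dimensionality of \(\operatorname{span}\{x_n\}_{n\le r(m)}\) to force the low-index part of any approximating combination to vanish in the limit.

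The decisive step, however, is never carried out. You correctly note that the dense-set transfer fails because the \(P_m\) are not uniformly bounded, and you propose to rescue it by estimating \(\vertiii{z_m - v_m}\) and exploiting \(\langle z_m - v_m, x_n^\ast\rangle = 0\) for \(n\le r(m)\). But that cancellation only tells you that a test functional \(g=\sum_{n\le R}a_n x_n^\ast\) acts on \(z_m - v_m\) through its tail \(g_{>r(m)}=\sum_{r(m)<n\le R}a_n x_n^\ast\). To close the estimate you would need either a bound on \(\|g_{>r(m)}\|_{\BAN^\ast}\) in terms of \(\|g\|_{\BAN^\ast}\) that does not degrade with \(m\) --- which amounts to uniform boundedness of the dual partial-sum projections, a Schauder-basis-type property that ``bounded \(+\) norming'' does not supply; the crude bound via \(|a_n|\le\|g\|\,\BANn{x_n}\) and \(\sup_n\BANn{x_n}\|x_n^\ast\|\le M\) only gives \(\|g_{>r(m)}\|\le 1+r(m)M\), which grows --- or an explicit decay estimate for \(\operatorname{dist}(z_m,E_m)\), which appears nowhere in the sketch. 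The phrases ``the dangerous contributions cancel automatically'' and ``boundedness \ldots\ gives the quantitative control needed'' are assertions standing in for the entire content of the lemma. A workable route is to first prove a \emph{uniform} quantitative claim of the shape: for every \(p\) and \(\varepsilon>0\) there is \(q>p\) such that \emph{every} \(x\) admits \(v\in\operatorname{span}\{x_n\}_{p<n\le q}\) with \(\BANnt{x-\sum_{n\le p}\langle x,x_n^\ast\rangle x_n - v}\) controlled by a constant (depending only on \(\lambda\) and \(M\)) times \(\operatorname{dist}(x,\operatorname{span}\{x_n\}_{n\le p})\), plus \(\varepsilon\); one then builds \(r(m)\) by iterating this and lets fundamentality drive the right-hand side to zero for each fixed \(x\). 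Your write-up never isolates, let alone proves, such a uniform statement, so the proof is incomplete.
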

\begin{rem}
Note that by a result of \citet{Fonf-1986}, the above property holds if and only if the M-basis is norming; cf.\ also \citet{Terenzi-1994}.
\end{rem}
\noindent
If (selecting a subsequence depending on \(x\) if necessary) we could contrive that \(v_m\to0\) in \(\BAN\) 
for the M-basis obtained in \emph{Steps 1} and \emph{2},
then we would have obtained the required expression of \(x\) as the limit of partial sums of orthogonal
elements of \(\CM\).

\emph{Step 4:} We seek a block perturbation providing the subsequence required in \emph{Step 3}.
A \emph{block perturbation} $\{y_n,y_n^\ast\}_n $ of $\{x_n,x_n^\ast\}_n $ amounts (for our purposes) to finding an increasing sequence $\{q_m\}_{m\in\N}$ of positive integers such that for every $m$
\[
\begin{split}
\text{span}\{y_n\}_{n=q_m+1}^{q_{m+1}}=\text{span}\{x_n\}_{n=q_m+1}^{q_{m+1}}\quad \text{ and }\quad  \text{span}\{y_n^\ast\}_{n=q_m+1}^{q_{m+1}}=\text{span}\{x_n^\ast\}_{n=q_m+1}^{q_{m+1}}.
\end{split}
\]
If it is not possible to find a 
block perturbation
leading to $v_m\to 0$ in $\BAN$ as described in \emph{Step 3}, then the \cite{Terenzi-1994} argument uses further careful constructions of block
perturbations of the M-basis, 
together with the \citet{Dvoretzky-1961} Hilbert space approximation Theorem,
to produce a new norming M-basis (depending on \(x\)) which 
can be used to generate successive partial sums which approximate \(x\) in \(\BAN\)-norm.
Because block perturbations are used, these successive partial sums can be expressed in terms of a sequence of orthogonal
elements of \(\CM\) (depending on $x$), and so the objective is attained.

We end this section with a couple of remarks about the last proof.
\begin{rem}
We have assumed that we can pick an M-basis for $\BAN$ whose elements are orthogonal in \(\CM\).
At 
present
we are not aware of 
useful
sufficient conditions
to
guarantee this.
Indeed we do not know if it is possible \emph{always} to find a $\CM$-orthogonal M-basis for $\BAN$,
though this is implausible.
\end{rem}
\begin{rem}
Note that 
the orthogonal M-basis 
needs to be
\emph{norming}.
The norming property is guaranteed when $\BAN$ is quasi-reflexive (i.e., the canonical image of $\BAN$ in its second dual has finite co-dimension), 
following
a result of \citet{Petunin-1964} (cf.\ also \citealp{Singer-1961-62}).
However \citep{DavisLindenstrauss-1972} 
if $\BAN$ is not quasi-reflexive
then it is always possible to find a total subspace of $\BAN^\ast$ that is not norming in $\BAN$.
Hence the norming assumption is necessary when dealing with general separable Banach spaces.
\end{rem}

\subsection{Further work}\label{sec:further-work}
Apart from addressing the open question \ref{qn:open},
one 
might
also ask whether 
one could
additionally 
couple functionals of the two Banach-valued Brownian motions,
such as for example their L\'evy stochastic areas. 
This can be done in the finite-dimensional
case \cite{BenArousCranstonKendall-1995,Kendall-2007,Kendall-2009d}.
However L\'evy stochastic areas are quadratic objects, so it seems likely that only rather limited results can be obtained in infinite-dimensional cases.

\cite{BanerjeeKendall-2014} have shown that geometric criteria tightly constrain Markovian maximal couplings 
for (finite-dimensional) smooth elliptic diffusions:
in dimension \(2\) and above, the existence of a Markovian maximal coupling forces the diffusion
to be Brownian motion on a simply-connected space of constant curvature, with drift given by a combination
of a Killing vectorfield and (in the Euclidean case) a vectorfield related to scaling symmetry.
It is evident from Theorem \ref{thm:xinH} that the notion of maximality does not extend usefully to the Banach-space case;
but is there a weaker form of optimality which also imposes special requirements in some underlying geometry?

Turning to potential applications, it is natural to speculate about applications
of these coupling constructions to multi-scale problems. 
For example multiresolution image analysis models an image as an infinite hierarchy of features of progressively
finer resolution:
there are interesting phase transition phenomena linked to image analysis issues
\citep{KendallWilson-2003},
while a Coupling-from-the-Past algorithm 
has been developed for a point process example \citep{AmblerSilverman-2010}.
The ``coupling at time \(\infty\)'' constructions of Theorem \ref{thm:schauder_WnotH}
are suggestive for such problems. 
Finally, Hairer and others (for example, \citealp{Hairer-2002,HairerMattingley-Scheutzow-2011})
have discussed applications of rather specific couplings to certain SPDE. 
It would be interesting to relate the abstract considerations of the present paper to this applied context.

\nocite{}
\bibliographystyle{chicago}
\bibliography{Banach}

\newpage


\end{document}